\newtheorem{theorem}{Theorem}
\begin{document}
	
	\title{Solving ordinary differential equations \\ using Schur decomposition}
	
	\author{David Arnas\thanks{Purdue University, West Lafayette, IN 47907, USA. Email: \textsc{darnas@purdue.edu}}}
	
	\date{}	
	
	\maketitle
	
	\thispagestyle{firststyle}
	
	\begin{abstract}
		This work introduces a methodology to solve ordinary differential equations using the Schur decomposition of the linear representation of the differential equation. This is done by first transforming the system into an upper triangular system using the Schur decomposition, and second, by generating the solution sequentially following the upper triangular structure. In addition, and when dealing with non-linear perturbed systems, this work proposes a methodology based on operator theory to find an approximate linear representation of perturbed non-linear systems. Particularly, we focus on polynomial differential equations and the use of Legendre polynomials to represent the solution. Based on these results, a perturbation technique is also proposed to study these problems. A set of algorithms to automate these methodologies are included.
	\end{abstract}

\section{Introduction}

Ordinary differential equations are of extreme importance in a wide variety of fields, including Physics, Mathematics, Engineering, or Economics. Of special  interest is the study of the so called Cauchy initial value problem due to its importance in dynamical systems. To that end, a large variety of approaches have been proposed over the years, including both analytical~\cite{book,var} and numerical methodologies~\cite{jorba,TFC}.

In this work we propose a technique to generate the solution of ordinary linear differential equations based on the Schur decomposition~\cite[Chapter~7]{schur}. This allows to perform a unitary transformation of the initial set of variables that transforms the matrix representation of the system into an upper triangular system that can be solved sequentially. To that end, a theorem and its corresponding algorithm are proposed to generate the sequential analytic solution of this upper triangular system of differential equations.

Unfortunately, this proposed methodology is only applicable to linear systems, and thus, a different approach is required when dealing with non-linear differential equations. Particularly, a methodology based on operator theory is proposed to obtain an approximated linear representation of perturbed problems, that is later used with the Schur decompostion technique allowing the generation of an approximate analytical solution of the system. 

Operator theory has its roots on functional analysis~\cite[Chapter~2]{conway2019course} and the study of linear operators in functional spaces~\cite[Chapter~4]{kowalski1991nonlinear}. Operator theory is commonly used in theoretical physics~\cite[Chapter~7]{naylor2000} (specially in quantum mechanics~\cite[Chapter~1]{prigogine}), and other applications~\cite{williams2015data} such as fluid-dynamics~\cite{mezic2013analysis} or control~\cite{brunton2016koopman,surana2016koopman}. Particularly, this work is based on the idea behind the Koopman operator~\cite{koopman1931hamiltonian,neumann1932operatorenmethode}, a linear operator defined in an infinite Hilbert space that is able to transform a non-linear system in a finite number of dimensions into a linear system in an infinite number of dimensions. However, since it is not possible to work on an infinite number of dimensions, we can only study a subspace, and thus, the resultant representation is an approximation of the original non-linear system. Nevertheless, this transformation provides an accurate linearization of the system (being the accuracy dependent on the problem studied and the size of the subspace selected) that can be used in many applications due to its linear properties. 

The Koopman operator assumes the existence of a set of eigenfunctions that can represent the solution of the system of differential equations. However, this is only true when the resultant system is diagonalizable, and thus, it is possible to apply the spectral theorem. This limits the direct use of the Koopman operator in many problems. To overcome this limitation, we make use of the unitary transformation provided by the Schur decomposition, which allows to solve a wider variety of problems that cannot be solved by the only use of the spectral theorem. In addition, and since the proposed technique does not have this limitation, it enables the use of a wider variety of basis functions to represent the solution of the system. In that regard, we propose an algorithm to obtain the closed form computation of the linearization based on Legendre polynomials as basis functions applied to polynomial differential equations. 

Moreover, and based on these results, a methodology is proposed to study perturbation problems, that is, systems of differential equations that can be decomposed into a strong linear part that is perturbed by a weak non-linear term. To that end, we show that the structure of the solution provided by the Schur decomposition for the unperturbed problem can be used to approximate the perturbed system. 

This manuscript is organized as follows. First, an algorithm is proposed for the automatic analytic solution generation of linear ordinary differential equations. Second, this result is extended to non-linear perturbed systems using operator theory to obtain an approximate linear representation of the system of differential equations. Additionally, a set of algorithms to obtain a closed form approximated analytical solution are provided for the case of non-linear polynomial differential equations where the basis functions used to represent the system are orthonormal Legendre polynomials in multiple dimensions. Third, we use these results to propose a methodology to study perturbed problems. And finally, we present some examples of application of these methodologies to show their performance. 

%%%%%%%%%%%%%%%%%%%%%%%%%%%%%%%%%%%%%%%%%%%%%%%%%%%%%%%%%%%%%%%%%%%%%%%%%%%%%%%%%%%%%%%%%%%%%%
%%%%%%%%%%%%%%%%%%%%%%%%%%%%%%%%%%%%%%%%%%%%%%%%%%%%%%%%%%%%%%%%%%%%%%%%%%%%%%%%%%%%%%%%%%%%%%
%%%%%%%%%%%%%%%%%%%%%%%%%%%%%%%%%%%%%%%%%%%%%%%%%%%%%%%%%%%%%%%%%%%%%%%%%%%%%%%%%%%%%%%%%%%%%%

\section{Autonomous linear systems}\label{sec:linear}

In this section we focus on the solution of linear ordinary differential equations. To solve them, an algorithm based on the Schur decomposition is proposed. In particular, from the matrix representation of the linear system, this work makes use of the Schur decomposition to find an unitary transformation that allows to transform the system of differential equations into an upper triangular matrix. Once this is done, an algorithm is proposed to solve sequentially this system while providing the analytical solution of the differential equation. 

\begin{theorem} \label{theorem:schur_linear}
	Let a system of $n$ autonomous ordinary linear differential equations be defined by the Cauchy problem:
	\begin{equation}
		\left\{\begin{tabular}{l}
			$\displaystyle\frac{d\mathbf{y}}{dx}=A\mathbf{y}$ \\
			$\mathbf{y}(x_0) = \mathbf{y_0}$
		\end{tabular}\right.,
	\end{equation}
	where $\mathbf{y}=(y_1,y_2,\dots,y_n)^t$ are the functions defining the dynamic, $x$ is the independent variable, and $A$ is a $n\times n$ constant matrix. Then, by using the Schur decomposition applied over $A$ such that $A = VTV^{-1}$, where $T$ is an upper triangular matrix and $V$ is a unitary transformation, the solution of this system is:
	\begin{equation}
		\mathbf{y} = V\mathbf{\Phi},
	\end{equation}
	where $\mathbf{\Phi}= (\phi_1,\phi_2,\dots,\phi_n)^t$ is provided by:
	\begin{equation}
		\phi_i = C_ie^{T_{ii}x}+\sum_{j=i+1}^{n}r_{ij}\phi_j \quad \forall i\in\{1,\dots,n\},
	\end{equation}
	being $C_i$ the integration constants, where:
	\begin{equation}
		C_i = \displaystyle\frac{1}{e^{T_{ii}x_0}}\left[\sum_{j=1}^{n}\left(V^{-1}_{ij}y_{j}(x_0)\right) - \sum_{j=i+1}^{n}\left(r_{ij}(x_0)\phi_j(x_0)\right)\right],
	\end{equation}
	and $r_{ij}$ a set of polynomials in the form:
	\begin{equation}
		r_{ij} = \sum_{k=0}^{n-1}p_{ijk}x^k,
	\end{equation}
	whose coefficients $p_{ijk}$ are given by:
	\begin{equation}
		p_{ijk} = \left\{\begin{tabular}{ll}
			$-\displaystyle\frac{\sum_{m=i+1}^{j-1}T_{mj}p_{im(n-1)}}{T_{jj}-T_{ii}}$ & if $k = n-1$, and $T_{ii} \neq T_{jj}$ \\
			$-\displaystyle\frac{(k+1)p_{ij(k+1)}}{T_{jj}-T_{ii}}-\frac{\sum_{m=i+1}^{j-1}T_{mj}p_{imk}}{T_{jj}-T_{ii}}$ & if $k\in\{1,\dots,n-2\}$, and $T_{ii} \neq T_{jj}$ \\
			$\displaystyle\frac{T_{ij}-p_{ij1}}{T_{jj}-T_{ii}}-\frac{\sum_{m=i+1}^{j-1}T_{mj}p_{im0}}{T_{jj}-T_{ii}}$ & if $k = 0$, and $T_{ii} \neq T_{jj}$ \\
			$-\displaystyle\frac{\sum_{m=i+1}^{j-1}T_{mj}p_{im(k-1)}}{k}$ & if $k\in\{2,\dots,n-1\}$, and $T_{ii} = T_{jj}$ \\
			$T_{ij}- \sum_{m=i+1}^{j-1}T_{mj}p_{im0}$ & if $k = 1$, and $T_{ii} = T_{jj}$ \\
			$0$ & if $k = n-1$, and $T_{ii} = T_{jj}$
		\end{tabular}\right..
	\end{equation}
\end{theorem}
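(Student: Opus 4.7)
The plan is to reduce the problem to an upper triangular system via the Schur change of variables and then solve it by back-substitution with a polynomial ansatz. First I would set $\mathbf{y} = V\mathbf{\Phi}$; because $V$ is a constant unitary matrix, differentiating and using $A = VTV^{-1}$ turns the Cauchy problem into the equivalent upper triangular system $d\mathbf{\Phi}/dx = T\mathbf{\Phi}$, with initial condition $\mathbf{\Phi}(x_0) = V^{-1}\mathbf{y_0}$. Writing the $i$-th component explicitly gives the scalar ODE
\begin{equation*}
\phi_i' = T_{ii}\phi_i + \sum_{j=i+1}^{n} T_{ij}\phi_j,
\end{equation*}
which shows that the system decouples in a triangular fashion: $\phi_n = C_n e^{T_{nn}x}$ is immediate, and each $\phi_i$ is obtained once $\phi_{i+1},\dots,\phi_n$ are known.

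Next I would impose the ansatz $\phi_i = C_i e^{T_{ii}x} + \sum_{j=i+1}^{n} r_{ij}\phi_j$ with $r_{ij}$ polynomials in $x$ to be determined. Differentiating the ansatz, substituting the already established ODEs for $\phi_j'$ (which produces a double sum $\sum_{j>i}\sum_{m>j} r_{ij}T_{jm}\phi_m$), and matching the coefficient of each independent function $\phi_j$ on both sides of the ODE (the $e^{T_{ii}x}$ parts cancel automatically by the choice of homogeneous term) yields the linear relation
\begin{equation*}
r_{ij}' + (T_{jj} - T_{ii})\, r_{ij} + \sum_{m=i+1}^{j-1} T_{mj}\, r_{im} = T_{ij},
\end{equation*}
valid for $j > i$, with the convention that the empty sum is zero when $j = i+1$. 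This is the key recursion driving everything else.

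I would then expand $r_{ij} = \sum_{k=0}^{n-1} p_{ijk} x^k$ and match powers of $x$ in the relation above. Equating the coefficient of $x^k$ gives $(k+1)p_{ij(k+1)} + (T_{jj}-T_{ii})p_{ijk} + \sum_{m}T_{mj}p_{imk} = [k=0]\,T_{ij}$; solving for $p_{ijk}$ produces the first three branches of the stated piecewise formula when $T_{ii}\neq T_{jj}$ (with the top branch coming from the fact that there is no $p_{ijn}$ term, the middle branch from the generic recursion, and the $k=0$ branch from the forcing $T_{ij}$). When $T_{ii} = T_{jj}$ the $p_{ijk}$ term drops out and the relation becomes a direct recursion on $p_{ij(k+1)}$ in terms of lower-$i$ data, yielding the remaining three branches; in particular the top coefficient must vanish, which is why $p_{ij(n-1)} = 0$ in the degenerate case, and this is the point that has to be justified by induction on $i$ counted downwards (using that the total degree introduced at each step is controlled by $j-i < n$). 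Finally, evaluating the ansatz at $x_0$, using $\mathbf{\Phi}(x_0) = V^{-1}\mathbf{y_0}$, and solving for $C_i$ produces the stated integration constants.

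The main obstacle I expect is bookkeeping rather than a deep idea: verifying that the polynomial degree of each $r_{ij}$ is truly bounded by $n-1$ across the $T_{ii}=T_{jj}$ branches (so that the ansatz is self-consistent), and checking that the double sum reorganization in the cross-coupling term lines up index-wise with the $T_{mj}p_{im\cdot}$ pattern in the stated formula. Once those indexing details are pinned down, the proof is a direct substitution followed by matching coefficients.
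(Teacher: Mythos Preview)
Your proposal is correct and follows essentially the same route as the paper: Schur change of variables to an upper triangular system, the ansatz $\phi_i = C_ie^{T_{ii}x}+\sum_{j>i}r_{ij}\phi_j$, derivation of the recursion $r_{ij}'+(T_{jj}-T_{ii})r_{ij}+\sum_{m=i+1}^{j-1}T_{mj}r_{im}=T_{ij}$ by coefficient matching, and then the polynomial expansion in powers of $x$ to obtain the $p_{ijk}$ formulas. Your plan is in fact slightly more careful than the paper in flagging the need to justify the degree bound $\deg r_{ij}\le n-1$ by downward induction on $i$; the paper simply posits a polynomial of degree $n-1$ and proceeds.
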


\begin{proof}
	The system of differential equations to solve consist on $n$ autonomous ordinary linear differential equations in the form:
	\begin{equation}\label{eq:dxax}
		\left\{\begin{tabular}{l}
			$\displaystyle\frac{d\mathbf{y}}{dx}=A\mathbf{y}$ \\
			$\mathbf{y}(x_0) = \mathbf{y_0}$
		\end{tabular}\right.,
	\end{equation}
	where $\mathbf{y}=(y_1,y_2,\dots,y_n)^t$ is a vector containing all the functions representing the solution of the differential equation, $x$ is the independent variable, $\mathbf{y_0}$ is the set of initial conditions, and $A$ is a constant matrix of size $n\times n$. In particular, each differential equation of the system can be rewritten as:
	\begin{equation}
		\displaystyle\frac{dy_i}{dx} = \sum_{j=1}^{n}A_{ij}y_j,
	\end{equation}
	and since $A_{ij}$ are bounded constants and the equation is homogeneous, the solution is Lipschitz continuous. Therefore, by applying the  Picard–Lindelöf theorem, the solution of the system exist and is unique. 
	
	The objective now is to find a process in which the system of equations described by Eq.~\eqref{eq:dxax} can be solved sequentially. In order to do that, we make use of the Schur decomposition. Particularly, the matrix $A$ can always be Schur-decomposed into:
	\begin{equation}\label{eq:schura}
		A = VTV^{-1}
	\end{equation} 
	where $T$ is an upper triangular matrix that contains in its diagonal the eigenvalues of $A$, and $V$ is an orthonormal matrix representing the unitary transformation between $A$ and $T$. It is important to note that first, the Shcur decomposition is not unique in general, and thus there can be multiple combinations of $\{T,V\}$ that decompose matrix $A$, and second, that $V^{-1}$ is always well defined since $V$ is a unitary matrix. Let $\Phi$ be a new set of variables generated by a linear combination of $\mathbf{y}$ such that $\Phi=V^{-1}\mathbf{y}$. Then, the derivative of $\Phi$ is:
	\begin{equation}
		\displaystyle\frac{d\Phi}{dx} = \frac{d}{dx}\left(V^{-1}\mathbf{y}\right) = V^{-1}\displaystyle\frac{d\mathbf{y}}{dx},
	\end{equation}
	since $V$ is a constant matrix obtained from the Schur decomposition of $A$. Moreover, by introducing Eq.~\eqref{eq:dxax} in the previous expression:
	\begin{equation}
		\displaystyle\frac{d\Phi}{dx} =  V^{-1}\displaystyle\frac{d\mathbf{y}}{dx} = V^{-1}A\mathbf{y},
	\end{equation}
	and using the Schur decomposition defined in Eq.~\eqref{eq:schura}:
	\begin{equation}
		\displaystyle\frac{d\Phi}{dx} = V^{-1}A\mathbf{y} = V^{-1}VTV^{-1}\mathbf{y} = TV^{-1}\mathbf{y},
	\end{equation}
	where we can identify $\Phi = V^{-1}\mathbf{y}$ to obtain:
	\begin{equation}
		\displaystyle\frac{d\Phi}{dx} = TV^{-1}\mathbf{y} = T\Phi,
	\end{equation}
	which is now a system of linear differential equations in $\Phi$, with $\Phi(x_0) = V^{-1}\mathbf{y_0}$, that can be solved sequentially starting with equation $n$. Therefore, from this point, we can focus on the system of differential equations:
	\begin{equation}\label{eq:dphi}
		\left\{\begin{tabular}{l}
			$\displaystyle\frac{d\Phi}{dx}=T\Phi$ \\
			$\Phi(x_0) = V^{-1}\mathbf{y_0}$
		\end{tabular}\right.,
	\end{equation}	
	and once the solution of $\Phi$ is obtained, $\mathbf{y}$ is derived using the unitary transformation from the Schur decomposition (Eq.~\eqref{eq:schura}). 
	
	For a given $\phi_i$ with $i\in\{1,\dots,n\}$, the associated differential equation can be expressed as:
	\begin{equation}\label{eq:dphij}
		\displaystyle\frac{d\phi_i}{dx}=\sum_{j=1}^{n}T_{ij}\phi_j = \sum_{j=i}^{n}T_{ij}\phi_j,
	\end{equation}
	since $T$ is an upper triangular matrix and thus $T_{ij}=0$ $\forall i>j$. We define a solution in the form:
	\begin{equation}\label{eq:proposed}
		\phi_i = C_ie^{T_{ii}x}+\sum_{j=i+1}^{n}r_{ij}\phi_j,
	\end{equation}
	where $C_i$ is the constant of integration and $r_{ij}$ is a general function of $x$. Therefore, the derivative of the proposed solution is:
	\begin{equation}
		\displaystyle\frac{d\phi_i}{dx} = T_{ii}C_ie^{T_{ii}x} + \sum_{j=i+1}^{n}\left(\frac{dr_{ij}}{dx}\phi_j+r_{ij}\frac{d\phi_j}{dx}\right).
	\end{equation}
	By multiplying Eq.~\eqref{eq:proposed} by $T_{ii}$ and rearranging the terms, the following expression is obtained:
	\begin{equation}
		T_{ii}C_ie^{T_{ii}x} = T_{ii}\phi_i - T_{ii}\sum_{j=i+1}^{n}r_{ij}\phi_j,
	\end{equation}
	which introduced in the previous equation leads to:
	\begin{equation}
		\displaystyle\frac{d\phi_i}{dx} = T_{ii}\phi_i - T_{ii}\sum_{j=i+1}^{n}r_{ij}\phi_j + \sum_{j=i+1}^{n}\left(\frac{dr_{ij}}{dx}\phi_j+r_{ij}\frac{d\phi_j}{dx}\right).
	\end{equation}
	Also, from Eq.~\eqref{eq:dphij} we know that:
	\begin{equation}
		\displaystyle\frac{d\phi_j}{dx} = \sum_{m=j}^{n}T_{jm}\phi_m,
	\end{equation}
	and thus:
	\begin{eqnarray}
		\displaystyle\frac{d\phi_i}{dx} & = & T_{ii}\phi_i - T_{ii}\sum_{j=i+1}^{n}r_{ij}\phi_j + \sum_{j=i+1}^{n}\left(\frac{dr_{ij}}{dx}\phi_j+r_{ij}\sum_{m=j}^{n}T_{jm}\phi_m\right) = \nonumber \\
		& = & T_{ii}\phi_i + \sum_{j=i+1}^{n}\left[\frac{dr_{ij}}{dx}\phi_j+r_{ij}\left(-T_{ii}\phi_j+\sum_{m=j}^{n}T_{jm}\phi_m\right)\right].
	\end{eqnarray}
	This expression can be combined with Eq.~\eqref{eq:dphij} to obtain:
	\begin{equation}
		\displaystyle\frac{d\phi_i}{dx} = \sum_{j=i}^{n}T_{ij}\phi_j = T_{ii}\phi_i + \sum_{j=i+1}^{n}\left[\frac{dr_{ij}}{dx}\phi_j+r_{ij}\left(-T_{ii}\phi_j+\sum_{m=j}^{n}T_{jm}\phi_m\right)\right],
	\end{equation}
	from which an identification of terms can be performed since the expression must hold true for the whole domain. Particularly, we can rearrange the previous expression into:
	\begin{equation}
		T_{ii}\phi_i + \sum_{j=i+1}^{n}T_{ij}\phi_j = T_{ii}\phi_i + \sum_{j=i+1}^{n}\left[\displaystyle\frac{dr_{ij}}{dx}+\sum_{m=i+1}^{n}\left(r_{im}T_{mj}\right)-T_{ii}r_{ij}\right]\phi_j,
	\end{equation}
	or, in other words:
	\begin{equation}
		T_{ij} = \displaystyle\frac{dr_{ij}}{dx}+\sum_{m=i+1}^{n}\left(r_{im}T_{mj}\right)-T_{ii}r_{ij}.
	\end{equation}	
	Moreover, since $T_{mj}=0$ $\forall m>j$, there are terms in the summation that do not contribute to the former expression and thus:
	\begin{equation}\label{eq:difrij}
		T_{ij} = \displaystyle\frac{dr_{ij}}{dx}+\sum_{m=i+1}^{j}\left(r_{im}T_{mj}\right)-T_{ii}r_{ij},
	\end{equation}
	which as can be seen, it is a linear differential equation in $r_{ij}$ that only depends on the values of $r_{im}$ with $i+1<m<j$, that is, the solution to $r_{ij}$ can be obtained sequentially. To that end, a solution to Eq.~\eqref{eq:difrij} in polynomial form is proposed such that:
	\begin{equation}
		r_{ij} = \sum_{k=0}^{n-1}p_{ijk}x^k.
	\end{equation}
	This proposed solution is then introduced in Eq.~\eqref{eq:difrij} leading to the following expression:
	\begin{equation}
		T_{ij} = \sum_{k=1}^{n-1}\left(kp_{ijk}x^{k-1}\right) + \sum_{m=i+1}^{j}\left[T_{mj}\sum_{k=0}^{n-1}\left(p_{imk}x^k\right)\right] - T_{ii}\sum_{k=0}^{n-1}\left(p_{ijk}x^k\right),
	\end{equation}
	which defines the relations between the polynomial coefficients of $r_{ij}$ ($p_{ijk}$) with the elements of the matrix $T$. This relation can be obtained by identification of the coefficients of this expression for the different powers of $x$. Particularly, for $x^0$ the following relation is obtained:
	\begin{equation}\label{eq:coef_1}
		T_{ij} = p_{ij1} + \left(T_{jj} - T_{ii}\right)p_{ij0} + \sum_{m=i+1}^{j-1}\left(T_{mj}p_{im0}\right).
	\end{equation}
	If instead, a general $x^k$ with $k\in\{1,\dots,n-2\}$ is considered, the resultant expression is:
	\begin{equation}\label{eq:coef_2}
		0 = (k+1)p_{ij(p+1)} + \left(T_{jj} - T_{ii}\right)p_{ijk} + \sum_{m=i+1}^{j-1}\left(T_{mj}p_{imk}\right).
	\end{equation}
	And finally, for the case $x^{n-1}$ the resultant equation is:
	\begin{equation}\label{eq:coef_3}
		0 = \left(T_{jj} - T_{ii}\right)p_{ij(n-1)} + \sum_{m=i+1}^{j-1}T_{mj}p_{im(n-1)}.
	\end{equation}
	
	Equations~\eqref{eq:coef_1},~\eqref{eq:coef_2}, and~\eqref{eq:coef_3} show that the structure of the solution is dependent on the relations between $T_{ii}$ and $T_{jj}$, that is, the eigenvalues of the original matrix $A$. That way, two sets of solutions can be identified:
	\begin{itemize}
		\item If $T_{ii} \neq T_{ij}$:
		\begin{equation}
			p_{ijk} = \left\{\begin{tabular}{ll}
				$-\displaystyle\frac{\sum_{m=i+1}^{j-1}\left(T_{mj}p_{im(n-1)}\right)}{T_{jj} - T_{ii}}$ & if $k = n-1$ \\
				$-\displaystyle\frac{(k+1)p_{ij(k+1)}}{T_{jj} - T_{ii}} - \frac{\sum_{m=i+1}^{j-1}\left(T_{mj}p_{imk}\right)}{T_{jj} - T_{ii}}$ & if $k\in\{1,\dots,n-2\}$ \\
				$\displaystyle\frac{T_{jj}-p_{ij1}}{T_{jj} - T_{ii}} -\frac{\sum_{m=i+1}^{j-1}\left(T_{mj}p_{im0}\right)}{T_{jj} - T_{ii}}$ & if $k = 0$
			\end{tabular}\right.
		\end{equation}
		\item If $T_{ii} = T_{ij}$:
		\begin{equation}
			p_{ijk} = \left\{\begin{tabular}{ll}
				$-\displaystyle\frac{\sum_{m=i+1}^{j-1}\left(T_{mj}p_{im(k-1)}\right)}{k}$ & if $k\in\{2,\dots,n-1\}$ \\
				$T_{ij} -\sum_{m=i+1}^{j-1}T_{mj}p_{im0}$ & if $k = 1$ \\
				$0$ & if $k = 0$
			\end{tabular}\right.
		\end{equation}
	\end{itemize}
	
	With the values $p_{ijk}$ obtained, the set of solutions $\Phi$ are completely defined but for the constants of integration $C_i$. These are provided by the initial condition in Eq.~\eqref{eq:dphi}, $\Phi(x_0) = V^{-1}\mathbf{y_0}$, combined with the proposed solution structure from Eq.~\eqref{eq:proposed}. Particularly, the constants of integration are given by the following expression:
	\begin{equation}
		C_i = \displaystyle\frac{1}{e^{T_{ii}x_0}}\left[\sum_{j=1}^{n}\left(V^{-1}_{ij}y_{j}(x_0)\right) - \sum_{j=i+1}^{n}\left(r_{ij}(x_0)\phi_j(x_0)\right)\right].
	\end{equation}	
	Once this is done, the only step left is to obtain the solution in terms of the original variables $\mathbf{y}$. This can be performed using the matrix transformation from the Schur decomposition $V$: $\mathbf{y} = V\Phi$.
\end{proof}

\begin{algorithm}[!h]
{\footnotesize
	\vspace{2mm}
	{\bf function} $p$ = p\_coefficients$(T,n,\epsilon)$ \\
	$p = zeros(n,n,n)$; \\
	\For{$i$ from $ns$ to $1$ in steps of $-1$}{
		\For{$j$ from $i+1$ to $n$}{
			\uIf{$|T(j,j)-T(i,i)| < \epsilon$}{
				\For{$s$ from $n-1$ to $1$ in steps of $-1$}{
					\For{$k$ from $i+1$ to $j-1$}{
						$p(i,j,s+1) = p(i,j,s+1) + T(k,j)*p(i,k,s)$;
					}
					$p(i,j,s+1) = - p(i,j,s+1)/s$;
				}
				\For{$k$ from $i+1$ to $j-1$}{
					$p(i,j,2) = p(i,j,2) + T(k,j)*p(i,k,1)$;
				}
				$p(i,j,2) = T(i,j)- p(i,j,2)$; \\
				$p(i,j,1) = 0$;
			}
			\Else{
				\For{$k$ from $i+1$ to $j-1$}{
					$p(i,j,n) = p(i,j,n) + T(k,j)*p(i,k,n)$;
				}
				$p(i,j,n) = - p(i,j,n)/(T(j,j)-T(i,i))$; \\
				\For{$s$ from $n-2$ to $1$ in steps of $-1$}{
					\For{$k$ from $i+1$ to $j-1$}{
						$p(i,j,s+1) = p(i,j,s+1) + T(k,j)*p(i,k,s+1)$;
					}
					$p(i,j,s+1) = -((s+1)*p(i,j,s+2)+p(i,j,s+1))/(T(j,j)-T(i,i))$;
				}
				\For{$k$ from $i+1$ to $j-1$}{
					$p(i,j,1) = p(i,j,1) + T(k,j)*p(i,k,1)$;
				}
				$p(i,j,1) =  T(i,j) - p(i,j,2) - p(i,j,1)$; \\
				$p(i,j,1) = p(i,j,1)/(T(j,j)-T(i,i))$;
				
			}
		}
	}
	{\bf end}
	\caption{Coefficients $p_{ijk}$} \label{alg:Schur}
}
\end{algorithm}

Algorithm~\ref{alg:Schur} presents a subroutine to obtain the coefficients $p_{ijk}$ from the triangular matrix $T$ using the result from Theorem~\ref{theorem:schur_linear}. In here, a parameter $\epsilon$ has been included to control the potential numerical errors resultant from the Schur decomposition and the comparison of eigenvalues of matrix $A$.

It is important to note that since the decomposition of a matrix in its Jordan normal form $J$ and its respective matrix of generalized eigenvectors $E$ is a particular case of Schur decomposition, this theorem can also be applied to a Jordan normal form. This has its advantages and disadvantages. The main advantage is that since the Jordan normal form minimizes the number of terms out of the diagonal in the upper triangular matrix $T$, so does the number of operations to obtain the solution of the differential equation. However, it presents the disadvantage of having to obtain the Jordan normal form itself, which, in general, is a computationally more expensive process and its calculation is very sensitive to perturbations. 

%%%%%%%%%%%%%%%%%%%%%%%%%%%%%%%%%%%%%%%%%%%%%%%%%%%%%%%%%%%%%%%%%%%%%%%%%%%%%%%%%%%%%%%%%%%%%%
%%%%%%%%%%%%%%%%%%%%%%%%%%%%%%%%%%%%%%%%%%%%%%%%%%%%%%%%%%%%%%%%%%%%%%%%%%%%%%%%%%%%%%%%%%%%%%
%%%%%%%%%%%%%%%%%%%%%%%%%%%%%%%%%%%%%%%%%%%%%%%%%%%%%%%%%%%%%%%%%%%%%%%%%%%%%%%%%%%%%%%%%%%%%%

\section{Non-linear perturbed systems} \label{sec:nonlinear}

The results from Section~\ref{sec:linear} are only applicable to autonomous ordinary linear differential equations, and thus, they cannot be directly generalized to non-linear systems. However, it is still possible to obtain an approximation of perturbed non-linear systems using a linear representation. 

In this work, we approach this problem by performing an expansion of the dimensionality of the system and projecting the space into a set of orthonormal basis functions that will act as the new variables of the system of differential equations. This is the idea behind operator theory techniques such as the Koopman Operator. In general, applying these techniques to general problems causes the resultant matrices to be non-diagonalizable and thus, it is not possible to apply the spectral theorem or to effectively find the eigenfunctions of the system. This is a limitation that the Schur decomposition technique presented before does not share, allowing the effective application of this methodology to a wider set of problems.

In this section we present a methodology to linearize perturbed non-linear systems using operator theory. To that end, we present a general case where no assumption on the basis functions selected or the structure of the differential equations is performed. After that, we focus on the particular case of using Legendre polynomials applied to polynomial differential equations. A set of algorithms to automate this process are also included. These methodologies are specially suited for quasi-periodic systems and perturbation problems, although they can also be used in other problems but with more limitations. 

\subsection{General case}

Let the non-linear system of $n$ ordinary differential equations in study be defined by:
\begin{equation}
	\left\{\begin{tabular}{l}
		$\displaystyle\frac{d\mathbf{y}}{dx}=\mathbf{f}(\mathbf{y},x)$ \\
		$\mathbf{y}(x_0) = \mathbf{y_0}$
	\end{tabular}\right.,
\end{equation}
where $\mathbf{y}$ is the set of dependent variables that are required to be solved, $x$ is the independent variable, and $\mathbf{f}(\mathbf{y},x)$ is a function of the previous variables. In general, $\mathbf{f}(\mathbf{y},x)$ is a non-linear function that depends both in the dependent and independent variables. The goal then is to first make the system autonomous, if it is not already, and second, to find an approximate linear system that is able to represent the differential equation with an adequate accuracy.  

The transformation to an autonomous system can be done by introducing a new variable $z$ such that its value evolves linearly with the independent variable, that is, $z = \tau x$ and $dz/dx = \tau$ for all $x$, where $\tau$ is a constant that allows to normalize the time evolution within the ranges of definition of the basis functions. This allows to rewrite the original system as an autonomous system in the form of:
\begin{equation}
	\left\{\begin{tabular}{l}
		$\displaystyle\frac{d\mathbf{\tilde{y}}}{dx}=\mathbf{\tilde{f}}(\mathbf{\tilde{y}})$ \\
		$\mathbf{\tilde{y}}(x_0) = \mathbf{\tilde{y}_0}$
	\end{tabular}\right.,
\end{equation}
where $\mathbf{\tilde{y}} = \{z,\mathbf{y}\}$,  $\mathbf{\tilde{y}}(x_0) = \{\tau x_0,\mathbf{y}\}$ and $\mathbf{\tilde{f}}(\mathbf{\tilde{y}}) = \{\tau, \mathbf{f}(\mathbf{y},z)\}$. Note that, this is effectively expanding the system into an additional dimension with $n+1$ differential equations. From this point, we will only consider elements $\mathbf{y}$ and $\mathbf{f}(\mathbf{y})$ to represent the resultant autonomous system to simplify the notation in the following sections of the manuscript.

The objective now is to find an approximate linear representation of the differential equation. 
This is done in four steps. First, a normalization is performed in the variables of study to make them range between $[-1,1]$. This allows higher order polynomials to be systematically smaller than lower order monomials, which enables the convergence of the methodology for a finite number of terms. Second, an expansion on the number of dependent variables is performed. This allows to represent part of the non-linearities of the system of differential equations. Third, this expanded set of variables is transformed into a set of orthonormal basis functions that are used to represent the system. And finally, the Garlekin method is applied to obtain the projection of the resultant differential equation into the selected set of orthonormal basis functions while minimizing the error. This leads to a final system whose variables are the set of basis functions selected and that is completely linear. 

Let $m$ be the number and $\mathbf{h}=\mathbf{h}(\mathbf{y})$ with $\mathbf{h}=(h_1,h_2,\dots,h_m)^t$ be the set of orthonormal basis functions that are selected to represent the expanded system of differential equations. These basis functions, in order to be orthonormal, have to fulfill the following condition:
\begin{equation}
	\langle  h_i, h_j \rangle = \int_{\Omega} \rho h_ih_jd\Omega = \left\{\begin{tabular}{ll}
		0 & if $i \neq j$ \\
		1 & if $i = j$
	\end{tabular}\right. \quad \forall i,j \in \mathbb{N}^m,
\end{equation}
where $\langle  h_i, h_j \rangle$ is the inner product between $h_i$ and $h_j$, $\Omega$ is the domain of definition of the basis functions, and $\rho$ is the weighting function associated with the set of basis functions selected. Examples of these basis functions include, for instance, any combination of Fourier series, Hermite polynomials, Legendre polynomials, or Chebyshev polynomials. The methodology can also be used with non orthonormal basis functions. However, this requires more computations since all the inner products have to be normalized due to the use of a non-orthonormal space. For this reason, we focus on the use of orthonormal basis functions.

Then, by using these basis functions as the new set of variables, the following system of $m$ equations is obtained:
\begin{equation}
	\displaystyle\frac{d\mathbf{h}}{dx} = \nabla\mathbf{h}\mathbf{f},
\end{equation}  
where:
\begin{equation}
	\nabla \mathbf{h} = \left(\begin{array}{ccc} 
		\displaystyle\frac{\partial }{\partial y_1}h_1({\bf y}) & \cdots & \displaystyle\frac{\partial }{\partial y_n}h_1({\bf y}) \\ 
		\vdots & \ddots & \vdots\\
		\displaystyle\frac{\partial }{\partial y_1}h_m({\bf y}) & \cdots& \displaystyle\frac{\partial }{\partial y_n}h_m({\bf y})\end{array}\right),
\end{equation}
is a matrix of size $m \times n$ representing the gradient of the basis functions $\mathbf{h}$. This system is still non-linear due to the non-linearities present in $\mathbf{f}$. However it can be linearized by projecting it back into the set of basis functions already defined. This is the reason why we need to normalize the variables, to impose that higher order monomials make a smaller contribution in the solution of the system. 

Let $M$ be the matrix containing all the coefficients associated with this projection, that is, $M$ is the matrix of size $m\times m$ containing all the combinations of inner products that can be generated from the former expression. In particular:
\begin{equation} \label{eq:matrix}
	M_{ij} = \langle  \nabla h_i\mathbf{f}, h_j \rangle = \int_{\Omega} \rho \nabla h_i\mathbf{f} h_j d\Omega = \int_{\Omega}\rho \left[ \sum_{k=1}^{n} \displaystyle\frac{\partial h_i}{\partial y_{k}} f_k\right] h_j d\Omega,
\end{equation} 
where the components of $M$ can be used to approximate each total derivative as:
\begin{equation}
	\displaystyle\frac{d h_i}{dx} = \sum_{j=1}^{m} M_{ij}h_j.
\end{equation}
This allows to transform the original problem into the following system:
\begin{equation}
	\left\{\begin{tabular}{l}
		$\displaystyle\frac{d\mathbf{h}}{dx} = M\mathbf{h}$ \\
		$\mathbf{h}(x_0) = \mathbf{h}(\mathbf{y_0})$
	\end{tabular}\right.,
\end{equation}
which is now a linear system of $m$ differential equations. In order to solve it, we make use of the Schur decomposition to be able to obtain the evolution of each basis function from the system sequentially. Particularly, the Schur decomposition of $M$ is defined as $M = VTV^{-1}$, where $T$ is an upper triangular matrix whose diagonal contains the eigenvalues of $M$, and $V$ is a unitary transformation matrix. Let $\Phi = (\phi_1,\phi_2,\dots,\phi_m)^t$ be a set of new dependent variables related to the basis functions through:
\begin{equation}
	\Phi = V^{-1}\mathbf{h}.
\end{equation} 
Then, the derivative of this new set of variables is:
\begin{equation}
	\displaystyle\frac{d\Phi}{dx} = V^{-1}\frac{d\mathbf{h}}{dt} = V^{-1}M\mathbf{h} = TV^{-1}\mathbf{h} = T\Phi,
\end{equation}
which results in the following system of differential equations:
\begin{equation}
	\left\{\begin{tabular}{l}
		$\displaystyle\frac{d\Phi}{dx} = T\Phi$ \\
		$\Phi(x_0) = V^{-1}\mathbf{h}(x_0)$
	\end{tabular}\right. .
\end{equation}
This system can be solved sequentially starting by $\phi_m$, being the structure of the solution the one provided by Theorem~\ref{theorem:schur_linear}:
\begin{equation}
	\phi_i = C_ie^{T_{ii}x}+\sum_{j=i+1}^{n}r_{ij}\phi_j \quad \forall i\in\{1,\dots,n\},
\end{equation}
where $C_i$ are the integration constants and $r_{ij}$ is a known polynomial in the independent variable $(x)$.

Once the solution of $\Phi$ is obtained, it is possible to express the solution in the original set of variables. However, in general, the initial set of variables do not have an exact representation using the set of basis functions selected, and thus, a projection is again required to represent these variables. Let, $H$ be the $n\times m$ matrix containing the coefficients of that projection. Then, these coefficients are provided by this inner product:
\begin{equation} \label{eq:modes}
	H_{ij} = \langle  y_i, h_j \rangle = \int_{\Omega}\rho y_i h_j d\Omega,
\end{equation}
which means that the solution of the original variables can be represented as:
\begin{equation}
	\mathbf{y} = H\mathbf{h} = HV\Phi.
\end{equation}

It is important to note that due to the finite set of basis functions used, there are two conditions that this methodology has to fulfill in order to converge. First, the variables have to be bounded in the range $[-1,1]$. Alternatively, the weighting function $\rho$ has to make them decrease very fast outside this range. Second, the coefficients of the non-linear part cannot be larger than the linear part. These conditions are required in order to not increase the error as the order of the basis functions increases. Note that perturbation problems are one of the examples of these kind of problems if their variables are normalized properly. 

\subsection{Using Legendre polynomials as basis functions}

Legendre polynomials have some interesting properties when compared to other orthogonal basis functions. First, they are based on polynomials and thus, they have very good analytical properties and are easy to work with them. Second, they are defined in the range $[-1,1]$, making the range of the integral well defined as opposed to other basis functions such as Hermite functions. Additionally, this is the range in which we are interested since it makes the methodology to converge. Finally, the weighting function $\rho$ is constant, meaning that the weighting distribution is uniform in the whole domain of definition (and as such, the error). All these properties allow to obtain a closed form solution of the inner products from Eqs.~\eqref{eq:matrix} and~\eqref{eq:modes} when the differential equation is represented in polynomial form.  

In general, non-linear systems are not fully represented in polynomial form. However, in many cases it is possible to find a polynomial representation of the system even if that means increasing the number of variables to be able to perform that representation. An example of this is the zonal harmonics problem that appears in in the motion of an object around a non-spherical celestial body~\cite{zonal}. Therefore, in this section we will focus on differential equations that are fully polynomial, that is, that can be represented as:
\begin{equation}
	\displaystyle\frac{d y_i}{dx}=f_i(\mathbf{y}) = \sum_{j=1}^{T_i}\kappa_{(i,j)}\prod_{k=1}^{n}y_k^{\gamma_{(i,j,k)}},
\end{equation}
for all $i\in\{1,\dots,n\}$, where $T_i$ is the number of monomials in the polynomial $f_i$, $\kappa_{(i,j)}$ is the coefficient of the monomial $j$ from $f_i$, and $\gamma_{(i,j,k)}$ is the exponent of variable $y_k$ that belongs to the $j$-th monomial from the polynomial representation of the derivative of variable $y_i$. Therefore, we can represent these polynomials in an array $F$ of size $n\times nt \times (n + 1)$ where $nt$ is the maximum of all the $T_i$ $\forall i\in\mathbb{N}^n$. In this array $F(i,j,k)$, the first term identifies the differential equation $i$, the second term, each of the individual monomials represented from $f_i$, and finally, the third term relates to the coefficient $\kappa_{(i,j)}$ if $k = 1$, and to the exponents $\gamma_{(i,j,k)}$ if $k\neq 1$. In other words, we can represent the $j$-th monomial from differential equation $i$ as: $F(i,j,1) = \kappa_{(i,j)}$, $F(i,j,k+1) = \gamma_{(i,j,k)}$ $\forall k\in\mathbb{N}^n$. This process has to be continued for all the variables of the problem $y_i$ with $i\in\{1,\dots,n\}$.

Once the array $F$ is defined, we have to select the maximum order of orthonormal basis functions that we are interested to use. This is a very important parameter that will determine both the amount of computation performed (and also the size of matrix $M$), and the accuracy of the methodology. Let $\sigma$ be the maximum order of basis functions. Then, we can use the properties of combinatorics to obtain the number of basis functions that will be generated ($m$):
\begin{equation}
	m = \left(\begin{array}{c}
		n+\sigma \\
		n
	\end{array}\right),
\end{equation}
which can be translated into a subroutine like Algorithm~\ref{alg:basis_number}. 

\begin{algorithm}[!h]
{\footnotesize
	\vspace{2mm}
	{\bf function} $m$ = basis\_number$(n,\sigma)$ \\
	$m = 1$; \\
	\For{$i$ from $1$ to $n$}{
		$m = m*(\sigma+i)/i$;
	}
	{\bf end}
	\caption{Number of basis functions} \label{alg:basis_number}
}
\end{algorithm}

With $F$ and the number of basis functions $m$, we have all the elements to obtain the matrix $M$ (see Eq.~\eqref{eq:matrix}). The process to compute it is as follows. First, we define the ordering of the basis functions based on the maximum order of their monomials. Second, we generate the Legendre polynomials in one variable up to order $\sigma$ and normalize them. Third, we obtain the derivative of each one of these normalized Legendre polynomials. Fourth, we generate the multidimensional Legendre polynomials based on the one dimensional normalized Legendre polynomials and the ordering defined for the basis functions. Fifth, the computation of matrix $M$ is performed in closed form. Finally, the original variables $\mathbf{y}$ are projected also in the same set of basis functions to obtain the matrix $H$. Each one of these steps is studied in more detail in the following subsections.

\subsubsection{Ordering of basis functions}

Defining an ordering of basis functions is important as a way to control the structure of the algorithm operations. This allows to reduce the overall computational time of the algorithm. Let $I$ be an array of size $m \times n$, where $I(i,j)$ provides the order of the variable $j\in\{1,\dots,n\}$ in the multidimensional basis function $i\in\{1,\dots,m\}$. Note tat this array $I$ can also be used to determine the maximum exponent of each variable in a given basis polynomial. 

\begin{algorithm}[!h]
	{\footnotesize
		\vspace{2mm}
		{\bf function} $I$ = ordering$(n,\sigma, m)$ \\
		$I = zeros(m,n)$; \\
		$s = 1$; \\
		\For{$k$ from $1$ to $\sigma$}{
			$s = s + 1$; \\
			$I(s,1) = k$; \\
			$temp = I(s,:)$; \\
			\While{$I(s,\sigma)\neq k$}{
				$temp(1) = temp(1) + 1$; \\
				\For{$i$ from $1$ to $n-1$}{
					\uIf{$temp(i) \leq k$}{
						break \\
					}
					\Else{
						$temp(i) = 0$; \\
						$temp(i+1) = temp(i + 1) + 1$;
					}
				}
				$sum = 0$; \\
				\For{$i$ from $1$ to $n$}{
					$sum = sum + temp(i)$;
				}
				\If{$sum = k$}{
					$s = s + 1$; \\
					$I(s,:) = temp$;
				}
			}
		}
		{\bf end}
		\caption{Ordering of basis functions} \label{alg:ordering}
	}
\end{algorithm}

For the purposes of this algorithm we will order the basis functions based first on their order $\sigma$, and second, on the exponents of $y_i$ following a lexicographic ordering. Algorithm~\ref{alg:ordering} shows the subroutine to generate such ordering for an arbitrary number of dimensions and maximum order of the basis functions. This algorithm works as a digital counter in base $\sigma$, where it starts with the basis function of order $0$, that is, the constant monomial, and continues generating all the basis functions with increasing order $\sigma$. 

The advantage of using this ordering is that if later we require a larger set of basis functions, we can use the results from previous computations. The same can be said if we have a computation performed with a large set of basis functions and we are interested to reduce the number of terms of the solution. An additional advantage is that in some problems it may be interesting to skip basis functions of some order since they are not used in the solution. This happens, for instance, when the differential equation is composed by monomials of odd order. In this case, only basis functions of odd order are required to represent the differential equation.

\subsubsection{Orthonormal Legendre polynomials}

The orthonormal Legendre polynomials are generated in two steps. First, the Legendre polynomials are obtained using the recursive function:
\begin{equation}
	L_{i+1} = \displaystyle\frac{(2 i + 1) y L_i - y L_{i-1} }{i + 1},
\end{equation}
where $L_i$ is the Legendre polynomial of order $i$ in one variable, and $L_0 = 1$ and $L_{1} = y$ are the Legendre polynomials that start the iteration. After this, the polynomials are normalized with:
\begin{equation}
	N_i = \displaystyle\sqrt{\displaystyle\frac{2i + 1}{2}} L_i,
\end{equation}
where $N_i$ are the normalized Legendre polynomials. This normalization assures that:
\begin{equation}
	\langle  N_i, N_i \rangle = 1 \quad \forall i.
\end{equation}

\begin{algorithm}[!h]
	{\footnotesize
		\vspace{2mm}
		{\bf function} $L$ = Legendre$(\sigma)$ \\
		$L = zeros(\sigma+1,\sigma+1)$; \\
		$L(1,1) = 1$; \\
		$L(2,2) = 1$; \\
		\For{$i$ from $3$ to $\sigma + 1$}{
			\For{$j$ from $1$ to $i-1$}{
				$L(i,j+1) = L(i,j+1) + (2*(i-2) + 1)/(i-1)*L(i-1,j)$; \\
				$L(i,j)   = L(i,j)   - (i-2)/(i-1)*L(i-2,j)$;
			}
		}
		{\bf end}
		\caption{Legendre polynomials} \label{alg:legendre}
	}
\end{algorithm}

\begin{algorithm}[!h]
	{\footnotesize	
		\vspace{2mm}
		{\bf function} $N$ = normalized\_Legendre$(L,\sigma)$ \\
		$N = zeros(\sigma+1,\sigma+1)$; \\
		\For{$i$ from $3$ to $\sigma + 1$}{
			\For{$j$ from $1$ to $\sigma + 1$}{
				$N(i,j) = sqrt((2*(i-1)+1)/2)*L(i,j)$;
			}
		}
		{\bf end}
		\caption{Normalized Legendre polynomials} \label{alg:normalized}
	}
\end{algorithm}

These polynomials are stored in memory in the arrays $L$ and $N$ of size $(\sigma + 1)\times (\sigma + 1)$, where $\sigma$ is the maximum order of the basis functions used. In each of the components of this array it is stored the coefficient of the monomials of the Legendre polynomials taking into account an increasing ordering of the exponents, that is:
\begin{equation}
	L_i = \sum_{j=1}^{\sigma + 1} L(i,j)y^{j-1},
\end{equation}
where $L(i,j)$ are the components of the array of coefficients. Algorithm~\ref{alg:legendre} shows the subroutine to generate the array containing the coefficients of the Legendre polynomials, and Algorithm~\ref{alg:normalized} takes this result to generate the case of normalized Legendre polynomials.

\subsubsection{Derivative of orthonormal Legendre polynomials}

The derivative of the orthonormal Legendre polynomials is also stored in an array $D$ of size $(\sigma + 1)\times (\sigma + 1)$ and follows the same structure as the case for Legendre polynomials. Algorithm~\ref{alg:derivative} shows the subroutine to obtain this array from the normalized Legendre polynomials array.

\begin{algorithm}[!h]
	{\footnotesize
	\vspace{2mm}
	{\bf function} $D$ = derivative\_Legendre$(N,\sigma)$ \\
	$D = zeros(\sigma+1,\sigma+1)$; \\
	\For{$i$ from $2$ to $\sigma + 1$}{
		\For{$j$ from $1$ to $\sigma$}{
			$D(i,j) = j*N(i,j+1)$;
		}
	}
	{\bf end}
	\caption{Derivative of normalized Legendre polynomials} \label{alg:derivative}
}
\end{algorithm}

\subsubsection{Multidimensional Legendre polynomials}

The multidimensional Legendre polynomials are generated by multiplication of the one-dimensional normalized Legendre polynomials. In here, we make use of the ordering defined in Algorithm~\ref{alg:ordering} to select the order of generation of these polynomials, and also, the order in which the monomial exponents are represented. In other words, a multidimensional Legendre polynomial $\mathcal{L}_i$ can be represented as:
\begin{equation}
	\mathcal{L}_i = \sum_{j=1}^{m}\mathcal{L}(i,j)\prod_{k=1}^{n}y_k^{I(j,k)}.
\end{equation}
Algorithm~\ref{alg:multidimensional} shows the subroutine to generate these polynomials from the set of normalized Legendre polynomials in one dimension. Only the coefficients of these multidimensional polynomials are stored into an array of size $m\times m$ where the rows represent each individual polynomial, and the columns, the specific monomials of each polynomial.

\begin{algorithm}[!h]
	{\footnotesize
	\vspace{2mm}
	{\bf function} $\mathcal{L}$ = multidimensional\_Legendre$(N,n,m)$ \\
	$\mathcal{L} = zeros(m,m)$; \\
	\For{$i$ from $2$ to $m$}{
		\For{$j$ from $1$ to $m$}{
			$\mathcal{L}(i,j) = 1$; \\
			\For{$k$ from $1$ to $n$}{
				$\mathcal{L}(i,j) = \mathcal{L}(i,j)*N(I(i,k)+1,I(j,k)+1)$;
			}
		}
	}
	{\bf end}
	\caption{Multidimensional Legendre polynomials} \label{alg:multidimensional}
}
\end{algorithm}

\subsubsection{Operator matrix}

The computation of the operator matrix $M$ requires three steps. First, to obtain the partial derivatives of the multidimensional Legendre polynomials ($\nabla\mathbf{h}$). Second, to obtain the total derivative of these same polynomials $\nabla\mathbf{h}\mathbf{f}$. And third, to perform the inner products $\langle  \nabla\mathbf{h}\mathbf{f}, \mathbf{h} \rangle$ to obtain the operator matrix $M$.

\begin{algorithm}
	{\footnotesize
		\vspace{2mm}
		{\bf function} $M$ = operator\_matrix$(\mathcal{L},N,D,n,m)$ \\
		$M = zeros(m,m)$;  $\mathcal{P} = zeros(n,m)$;  $\mathcal{F} = zeros(m*nt,n+1)$; $\mu = zeros(1,n)$; $\eta = 2^n$; \\
		\For{$i$ from $1$ to $m$}{
			Partial Derivatives of Legendre polynomials in multiple dimensions ($\nabla\mathbf{\mathcal{L}}$) \\
			\For{$j$ from $1$ to $m$}{
				\For{$k$ from $1$ to $n$}{
					$\mathcal{P}(k,j) = 1$; \\
					\For{$h$ from $1$ to $n$}{
						\uIf{$k = h$}{
							$\mathcal{P}(k,j) = \mathcal{P}(k,j)*D(I(i,h)+1,I(j,h)+1)$; \\
						}
						\Else{
							$\mathcal{P}(k,j) = \mathcal{P}(k,j)*N(I(i,h)+1,I(j,h)+1)$;
						}
					}
				}
			}
			Total derivative ($\nabla\mathbf{\mathcal{L}}*\mathbf{f}$) \\
			$s = 0$; \\
			\For{$d$ from $1$ to $n$}{
				\For{$h$ from $1$ to $nt$}{
					\uIf{$F(d,h,1)=0$}{
						break; \\					
					}
					\Else{
						\For{$j$ from $1$ to $m$}{
							\If{$\mathcal{L}(d,j)\neq 0$}{
								$s = s + 1$; \\
								$\mathcal{F}(s,1) = F(d,h,1)*\mathcal{P}(d,j)$; \\
								\For{$k$ from $2$ to $n+1$}{
									$\mathcal{F}(s,k) = F(d,h,k) + I(j,k-1)$;
								}
							}
						}
					}
				}
			}
			
			Matix integration ($M$) \\
			\For{$k$ from $1$ to $m$}{
				\For{$j$ from $1$ to $m$}{
					\If{$\mathcal{L}(k,j) \neq 0$}{
						\For{$h$ from $1$ to $s$}{
							flag $= 1$; \\
							\For{$d$ from $1$ to $n$}{
								$\mu(d) = \text{round}(I(j,d) + \mathcal{F}(h,d+1) + 1)$; \\
								\If{$\mod(\mu(d),2) = 0$}{
									flag $= 0$; \\
									break;
								}
							}
						}
						\If{flag $= 1$}{
							$\nu = 1$; \\
							\For{$d$ from $1$ to $n$}{
								$\nu = \nu*\mu(d)$;
							}
							$M(i,j) = M(i,j) + \eta*\mathcal{L}(k,j)*\mathcal{F}(h,1)/\nu$;
						}
					}
				}
			}
			
		}
		{\bf end}
		\caption{Operator matrix} \label{alg:matrix}}
\end{algorithm}

The partial derivatives are obtained by multiplying the one-dimensional normalized Legendre polynomials ($N$) and their derivatives ($D$). In here, we make use of the property that each of these polynomials is only function of a single variable, therefore:
\begin{equation}
	\displaystyle\frac{\partial h_i}{\partial y_j} = \frac{\partial}{\partial y_j}\left(\prod_{\substack{k=1}}^{n}N_{ik}(y_k).\right) = \frac{\partial N_{ij}(y_j)}{\partial y_j}\prod_{\substack{k=1 \\ k\neq j}}^{n}N_{ik}(y_k),
\end{equation}
$N_{ik}$ are the normalized Legendre polynomials in variable $y_k$ corresponding to the basis function $h_i$. Algorithm~\ref{alg:matrix} shows the subroutine to generate these partial derivatives, where only the coefficients of the polynomials are stored in an array of size $n\times m$ for each individual basis function. This array contains the $n$ partial derivatives in each one of its rows, where the resultant polynomial is stored with the same structure as the array $\mathcal{L}$.  

The next step requires obtaining the total derivative, that is, $\nabla\mathbf{h}\mathbf{f}$. This is done by obtaining each individual monomial resulting from the multiplication, and storing the resultant coefficients in an array $\mathcal{F}$. This array has size $(m*nt)\times (n+1)$ to account for all possible combinations of monomials that can be generated in the multiplication of $\mathbf{b}$ (which at most has $nt$ terms) and $\nabla\mathbf{h}$ (which in the worst case scenario has $m$ potential elements). The structure of $\mathcal{F}$ has a similar structure as the array containing the information of the differential equation $F$, that is, in $\mathcal{F}(i,j)$ the first component $i$ alludes to the monomial in study, while the second $j$ to the coefficient of the monomial if $j=1$, or to the exponent of each variable of the monomial if $j\neq 1$. In other words, the total derivative of the basis function $h_i$ is represented as:
\begin{equation}
	\displaystyle\frac{d h_i}{dx} = \sum_{j=1}^{m*nt}\mathcal{F}(j,1)\prod_{k=2}^{n}y_{k-1}^{\mathcal{F}(j,k)}.
\end{equation}
This allows to perform the multiplication by dealing separately with the coefficients and the monomial exponents as seen in Algorithm~\ref{alg:matrix}.

The last step involves performing the analytical inner product between the total derivative $dh_i/dx$ and the set of basis functions $\mathbf{h}$. This integration is performed individually for each monomial of $\mathcal{F}$ taking advantage of the properties of even and odd functions. Particularly, when performing the integral in the symmetric range $[-1,1]$ and using even functions:
\begin{eqnarray}
	\int_{\Omega} \prod_{i=1}^{n}y_i^{\gamma_i} = \displaystyle\frac{2^n}{\prod_{i=1}^{n}(\gamma_i+1)} \quad \text{if} \ \{\gamma_i = 0 \mod(2)\} \ \forall i,
\end{eqnarray}
while for odd functions the integral is:
\begin{eqnarray}
	\int_{\Omega} \prod_{i=1}^{n}y_i^{\gamma_i} = 0 \quad \text{if} \ \exists i \ni \{\gamma_i \neq 0 \mod(2)\}.
\end{eqnarray}
By doing this, the algorithm is able to avoid performing the computation of a large proportion of the required integrals, being the rest of them obtained analytically using the previous expressions. Algorithm~\ref{alg:matrix} shows the subroutine to perform these inner products, and thus obtaining the operator matrix $M$, based on the previous results.

\subsubsection{Projection of the original variables}

In general, the original variables are not expressed in the set of basis functions selected. Therefore, it is required to find and accurate representation of the original variables $\mathbf{y}$ into the basis functions $\mathbf{h}$. This is done using the Garlekin method through the computation of the inner products as seen in Eq.~\eqref{eq:modes}. However, due to the structure of  $\mathbf{y}$ and $\mathbf{h}$ in the case of Legendre polynomials, this can be done directly since all the variables in $\mathbf{y}$ are monomials of first order in one variable, and thus, they only have components in the normalized Legendre polynomials of first order. Particularly, we know that for a first order basis function: 
\begin{equation}
	\langle  a_i y_i, a_iy_i \rangle = \int_{\Omega} a_i^2y_i^2 d\Omega = \displaystyle\displaystyle\frac{2^n}{3}a_i^2 = 1,
\end{equation}
therefore, the normalized coefficient for the first order Legendre polynomials is $a_i = \sqrt{3/2^n}$. Thus, the projection is:
\begin{equation}
	\langle  y_i, \displaystyle\sqrt{\displaystyle\frac{3}{2^n}}y_i \rangle = \int_{\Omega}\sqrt{\displaystyle\frac{3}{2^n}} y_i^2 d\Omega = \displaystyle\sqrt{\displaystyle\frac{2^n}{3}},
\end{equation}
which is coherent with the result that $y_i = \sqrt{2^n/3}a_iy_i$. Algorithm~\ref{alg:initial} shows the subroutine to obtain this projection in an automated process for any number of dimensions and order of basis functions.

\begin{algorithm}[!h]
	{\footnotesize
	\vspace{2mm}
	{\bf function} $H$ = projection\_variables$(n,m)$ \\
	$H = zeros(n,m)$; \\
	$\psi = \sqrt{2^n/3}$; \\
	\For{$i$ from $1$ to $n$}{
		$H(i,i+1) = \psi$;
	}
	{\bf end}
	\caption{Projection of the original variables} \label{alg:modes}
}
\end{algorithm}

\subsubsection{Boundary conditions for the basis functions}

The last element required to solve the problem is the generation of the boundary conditions for the expanded space, that is, for the set of basis functions $\mathbf{h}$. This can be directly obtained by applying the initial conditions in $\mathbf{y}(x_0)$ into the basis functions. Algorithm~\ref{alg:initial} shows an algorithm to obtain those conditions, where $h0$ is the array of size $1\times m$ containing the boundary conditions in the set of basis functions, and $y0$ is an array of size $1\times n$ containing the boundary conditions of the original variables $\mathbf{y}$.

\newpage
\begin{algorithm}[!h]
	{\footnotesize
	\vspace{2mm}
	{\bf function} $h0$ = boundary\_Legendre$(\mathcal{L},I,n,m)$ \\
	$h0 = zeros(m,1)$; \\
	\For{$i$ from $1$ to $m$}{
		\For{$j$ from $1$ to $m$}{
			$mult = \mathcal{L}(i,j)$; \\
			\For{$k$ from $1$ to $n$}{
				$mult = mult*y0(k)^{I(j,k)}$;
			}
			$h0(i) = h0(i) + mult$;
		}
	}
	{\bf end}
	\caption{Boundary conditions for the basis functions} \label{alg:initial}
}
\end{algorithm}

%%%%%%%%%%%%%%%%%%%%%%%%%%%%%%%%%%%%%%%%%%%%%%%%%%%%%%%%%%%%%%%%%%%%%%%%%%%%%%%%%%%%%%%%%%%%%%
%%%%%%%%%%%%%%%%%%%%%%%%%%%%%%%%%%%%%%%%%%%%%%%%%%%%%%%%%%%%%%%%%%%%%%%%%%%%%%%%%%%%%%%%%%%%%%
%%%%%%%%%%%%%%%%%%%%%%%%%%%%%%%%%%%%%%%%%%%%%%%%%%%%%%%%%%%%%%%%%%%%%%%%%%%%%%%%%%%%%%%%%%%%%%

\section{Perturbation model based on the Shur decomposition}

Perturbation problems are based on the idea of having a set of differential equations that can be studied as a combination of a linear system whose solution is known, plus the contribution of a non-linear term that is smaller than the linear term and that makes the system non-solvable analytically. In other words, these are problems that can be represented in this form:
\begin{equation} \label{eq:diff_ord1}
	\left\{\begin{tabular}{l}
		$\displaystyle\frac{d\mathbf{y}}{dx}=\mathbf{f}(\mathbf{y}) = \mathbf{b}(\mathbf{y}) + \epsilon\mathbf{g}(\mathbf{y}) $ \\
		$\mathbf{y}(x_0) = \mathbf{y_0}$
	\end{tabular}\right.,
\end{equation}
where $\mathbf{b}$ is a linear function in $\mathbf{y}$, $\mathbf{g}(\mathbf{y})$ is in general non-linear, and $\epsilon$ is a small parameter. The objective then is to apply the results from Sections~\ref{sec:linear} and~\ref{sec:nonlinear} to the solution and study of these systems.

Let $\mathbf{h}$ be a set of orthonormal basis functions as defined in Section~\ref{sec:nonlinear}. Then, the derivative of this set of basis functions is:
\begin{equation}
	\displaystyle\frac{d\mathbf{h}}{dx} = \nabla\mathbf{h}\mathbf{f} = \nabla\mathbf{h}\mathbf{b} + \epsilon\nabla\mathbf{h}\mathbf{g}.
\end{equation}
As proposed in the previous section, and if the variables are properly normalized, this system can be linearized in the set of basis functions $\mathbf{h}$ using the Garlekin method. That way, it is possible to transform the previous system into this approximated linear system:
\begin{equation}\label{eq:hpert}
	\left\{\begin{tabular}{l}
		$\displaystyle\frac{d\mathbf{h}}{dx} = M\mathbf{h} = B\mathbf{h} + \epsilon P\mathbf{h}$ \\
		$\mathbf{h}(x_0) = \mathbf{h}(\mathbf{y_0})$
	\end{tabular}\right.,
\end{equation}
where $M$, $B$, and $P$ are defined as the following inner products:
\begin{eqnarray}
	M_{ij} & = & \langle  \nabla h_i\mathbf{f}, h_j \rangle = \int_{\Omega} \rho \nabla h_i\mathbf{f} h_j d\Omega; \nonumber \\
	B_{ij} & = & \langle  \nabla h_i\mathbf{b}, h_j \rangle = \int_{\Omega} \rho \nabla h_i\mathbf{b} h_j d\Omega; \nonumber \\
	P_{ij} & = & \langle  \nabla h_i\mathbf{g}, h_j \rangle = \int_{\Omega} \rho \nabla h_i\mathbf{g} h_j d\Omega.
\end{eqnarray}
Note that $M = B + \epsilon P$, and thus, it is possible to obtain matrix $M$ directly or by composition of its linear ($B$) and the non-linear ($P$) parts. This approximated system can be solved by obtaining the Schur decompostion $M = B + \epsilon P = WUW^{-1}$, where $U$ is an upper triangular matrix, and $W$ is a unitary matrix transformation. Then, we can define $\Phi = W^{-1}\mathbf{h}$, obtaining the following system of differential equations:
\begin{equation}
\left\{\begin{tabular}{l}
$\displaystyle\frac{d\Phi}{dt} = U\Phi$ \\
$\Phi(x_0) = W^{-1}\mathbf{h}(x_0)$
\end{tabular}\right. .
\end{equation}
which can be solved sequentially as seen in Section~\ref{sec:linear} to obtain $\mathbf{\Phi}$. Once $\mathbf{\Phi}$ is obtained, this solution can be transformed back into the basis functions $\mathbf{h} = W\mathbf{\Phi}$.

This process allows to directly obtain the solution of the system, treating the perturbed problem in the same way as the original. However, it does not allow to study the contribution of the perturbation in the system. Therefore, we introduce two approaches to overcome that limitation.

\subsection{Decomposition of linear and perturbing terms}

The objective of this methodology is to obtain the exact solution of the system provided by Eq.~\eqref{eq:hpert}. Let $\mathbf{h}$ be defined as the sum of two components, $\mathbf{h^m}$ which correspond to the solution of the non-perturbed problem, and $\epsilon\mathbf{h^p}$ which corresponds to the difference between the perturbed and non-perturbed problem, that is, $\mathbf{h} = \mathbf{h^m} + \epsilon\mathbf{h^p}$. Particularly, since $\mathbf{h^m}$ is the solution to the non-perturbed problem, we know that:
\begin{equation}\label{eq:hm}
\displaystyle\frac{d\mathbf{h^m}}{dt} = B\mathbf{h^m}.
\end{equation}
Moreover, by computing the derivative of $\mathbf{h}$ in terms of $\mathbf{h^m}$ and $\mathbf{h^p}$:
\begin{equation}
\displaystyle\frac{d\mathbf{h}}{dx} = \displaystyle\frac{d\mathbf{h^m}}{dx} + \epsilon\displaystyle\frac{d\mathbf{h^p}}{dx} = B\mathbf{h^m} + \epsilon B\mathbf{h^p} + \epsilon P\mathbf{h^m} + \epsilon^2 P\mathbf{h^p},
\end{equation}
and using the previous relation, the derivative of $\mathbf{h^p}$ is obtained:
\begin{equation}\label{eq:hp}
\epsilon\displaystyle\frac{d\mathbf{h^p}}{dt} = \left(B + \epsilon P\right)\epsilon\mathbf{h^p} + \epsilon P\mathbf{h^m}.
\end{equation}
Equations~\eqref{eq:hm} and~\eqref{eq:hp} can be combined to obtain the following system:
\begin{equation}
\displaystyle\frac{d}{dx}\left(\begin{tabular}{c}
$\mathbf{h^p}$ \\
$\mathbf{h^m}$
\end{tabular}\right) = \left(\begin{tabular}{c|c}
$\left(B + \epsilon P\right)$ & $P$ \\
\hline
0 & $B$
\end{tabular}\right)\left(\begin{tabular}{c}
$\mathbf{h^p}$ \\
$\mathbf{h^m}$
\end{tabular}\right),
\end{equation}
which is already a nearly upper triangular system. In fact, if we define $\Phi^p = W^{-1}\mathbf{h^p}$, and $\Phi^m = V^{-1}\mathbf{h^m}$, where $B + \epsilon P = WUW^{-1}$ and $B = VTV^{-1}$ are the Schur decompositions of $M$ and $B$ respectively:
\begin{eqnarray}
\displaystyle\frac{d\Phi^p}{dx} & = & U\Phi^p + W^{-1} P V \Phi^m, \nonumber \\
\displaystyle\frac{d\Phi^m}{dt} & = & T\Phi^m,
\end{eqnarray}
or in other words:
\begin{equation}
\displaystyle\frac{d}{dt}\left(\begin{tabular}{c}
$\Phi^p$ \\
$\Phi^m$
\end{tabular}\right) = \left(\begin{tabular}{c|c}
$U$ & $(W^{-1} P V)$ \\
\hline
0 & $T$
\end{tabular}\right)\left(\begin{tabular}{c}
$\Phi^p$ \\
$\Phi^m$
\end{tabular}\right),
\end{equation}
which is a triangular matrix that can be solved sequentially as in the non-perturbed case or in the direct approach. Finally, the solution of $\mathbf{h}$ is provided by:
\begin{equation}
\mathbf{h} = \mathbf{h^m} + \epsilon\mathbf{h^p} = V\Phi^m + \epsilon W\Phi^p.
\end{equation}
As can be seen, the final solution is a combination of the dynamic of the non-perturbed problem $\Phi^m$ with a contribution of the term related with the perturbation $\Phi^p$. This means that this approach requires to solve both the non-perturbed system (represented by $B$), and the complete perturbed system (represented by $M = B+\epsilon P$). Therefore this methodology is computational more expensive than a direct approach. Nevertheless, if the solution in $\Phi^m$ is already known, the methodology only requires to solve the contribution of $\Phi^p$, and it allows to determine the contributions of the perturbing term in the solution. 

The objective now is simplify this result by using the property that $\epsilon$ is a small parameter. This allows to obtain an approximated solution to Eq.~\eqref{eq:hpert} by only using the decomposition of the unperturbed system.

\subsection{Approximated perturbed model}

In this case we make use of the condition that $\epsilon \ll 1$ to simplify the problem. Particularly, we depart from Eqs.~\eqref{eq:hm} and~\eqref{eq:hp} and for convenience, we rewrite them in here:
\begin{eqnarray}
\epsilon\displaystyle\frac{d\mathbf{h^p}}{dx} & = & \left(B + \epsilon P\right)\epsilon\mathbf{h^p} + \epsilon P\mathbf{h^m}, \nonumber \\
\displaystyle\frac{d\mathbf{h^m}}{dx} & = & M\mathbf{h^m}.
\end{eqnarray}
Since $\epsilon \ll 1$, we can approximate the former system into this one:
\begin{eqnarray} \label{eq:decomposition_order}
\displaystyle\frac{d\mathbf{h^p}}{dx} & = & B\mathbf{h^p} + P\mathbf{h^m} + \mathcal{O}(\epsilon), \nonumber \\
\displaystyle\frac{d\mathbf{h^m}}{dx} & = & B\mathbf{h^m},
\end{eqnarray}
where $\mathcal{O}(\epsilon)$ represents the only term in the first expression ($\epsilon P\mathbf{h^p}$) that is affected by a power of $\epsilon$. Therefore, the previous system can be approximated in matrix notation as:
\begin{equation}
\displaystyle\frac{d}{dx}\left(\begin{tabular}{c}
$\mathbf{h^p}$ \\
$\mathbf{h^m}$
\end{tabular}\right) = \left(\begin{tabular}{c|c}
$B$ & $P$ \\
\hline
0 & $B$
\end{tabular}\right)\left(\begin{tabular}{c}
$\mathbf{h^p}$ \\
$\mathbf{h^m}$
\end{tabular}\right),
\end{equation}
which can be easily transformed into an upper triangular system by the use of the Schur decomposition from the unperturbed problem $B = VTV^{-1}$. In particular, if $\Phi^m = V^{-1}\mathbf{h^m}$, and $\Phi^p = V^{-1}\mathbf{h^p}$:
\begin{eqnarray}
\displaystyle\frac{d\Phi^p}{dx} & = & V^{-1}B\mathbf{h^p} + V^{-1}P\mathbf{h^m} = T\Phi^p +V^{-1}PV\Phi^m,  \nonumber \\
\displaystyle\frac{d\Phi^m}{dx} & = & V^{-1}B\mathbf{h^m} = T\Phi^m,
\end{eqnarray}
and thus:
\begin{equation}
\displaystyle\frac{d}{dx}\left(\begin{tabular}{c}
$\Phi^p$ \\
$\Phi^m$
\end{tabular}\right) = \left(\begin{tabular}{c|c}
$T$ & $(V^{-1} P V)$ \\
\hline
0 & $T$
\end{tabular}\right)\left(\begin{tabular}{c}
$\Phi^p$ \\
$\Phi^m$
\end{tabular}\right),
\end{equation}
which can be solved sequentially as in the rest of cases. As can be seen, the approximated system has the particularity of only requiring to study the non-perturbed problem in order to generate the upper triangular matrix. In that regard, it is important to note that the eigenvalues, and thus, the frequencies of the system, are now a linear combination of the ones from the unperturbed problem. In fact, this idea can be extended to higher orders of the perturbing term.

%%%%%%%%%%%%%%%%%%%%%%%%%%%%%%%%%%%%%%%%%%%%%%%%%%%%%%%%%%%%%%%%%%%%%%%%%%%%%%%%%%%%%%%%%%%%%%
%%%%%%%%%%%%%%%%%%%%%%%%%%%%%%%%%%%%%%%%%%%%%%%%%%%%%%%%%%%%%%%%%%%%%%%%%%%%%%%%%%%%%%%%%%%%%%

\subsection{Higher order perturbation model}

The objective now is to reduce the error associated with removing the terms in $\epsilon$ from Eq.~\eqref{eq:decomposition_order}. This is done by representing the evolution of the basis functions as a series expansion in terms of the small parameter $\epsilon$. Particularly, we define:
\begin{equation}
\mathbf{h} = \mathbf{h^{(0)}} + \sum_{i=1}^{\tau}\epsilon^i\mathbf{h^{(i)}},
\end{equation}
where $\tau$ is the order of the perturbation model and $\mathbf{h^{(i)}}$ are the solution components for each term of the series expansion. This implies that $\mathbf{h}^{(0)}$ represents the solution of the non-perturbed problem. By introducing this decomposition in the differential equation we obtain:
\begin{equation}
\displaystyle\frac{d\mathbf{h}}{dx} = \sum_{i=0}^{\tau}\epsilon^i\frac{d\mathbf{h^{(i)}}}{dx} = B\sum_{i=0}^{\tau}\epsilon^i\mathbf{h^{(i)}} + P \sum_{i=0}^{\tau}\epsilon^{i+1}\mathbf{h^{(i)}},
\end{equation}
which leads to:
\begin{eqnarray}
\displaystyle\frac{d\mathbf{h^{(0)}}}{dx} & = & B\mathbf{h^{(0)}}, \nonumber \\
\displaystyle\frac{d\mathbf{h^{(i)}}}{dx} & = & B\mathbf{h^{(i)}} + P\mathbf{h^{(i-1)}}, \nonumber \\
\displaystyle\frac{d\mathbf{h^{(\tau)}}}{dx} & = & B\mathbf{h^{(\tau)}} + P\mathbf{h^{(\tau-1)}} + \epsilon P\mathbf{h^{(\tau)}}.
\end{eqnarray}
This means that the matrix representation of the system can be approximated by:
\begin{equation}
\displaystyle\frac{d}{dx}\left(\begin{tabular}{c}
$\mathbf{h^{(\tau)}}$  \\
$\mathbf{h^{(\tau-1)}}$  \\
$\cdots$ \\
$\mathbf{h^{(1)}}$ \\
$\mathbf{h^{(0)}}$
\end{tabular}\right) = \left(\begin{tabular}{c|c|c|c|c}
$B$ & $P$ & $0$ & $\cdots$ & $0$\\
\hline
0 & $B$ & $P$ & $0$ & $\cdots$ \\
\hline
0 & $0$ & $\cdots$ & $\cdots$ & $0$ \\
\hline
0 & $0$ & $0$ & $B$ & $P$ \\
\hline
0 & $0$ & $0$ & $0$ & $B$
\end{tabular}\right)\left(\begin{tabular}{c}
$\mathbf{h^{(\tau)}}$  \\
$\mathbf{h^{(\tau-1)}}$  \\
$\cdots$ \\
$\mathbf{h^{(1)}}$ \\
$\mathbf{h^{(0)}}$
\end{tabular}\right).
\end{equation}
It is important to note that the term that is simplified in the equations is $P\epsilon^{\tau+1}\mathbf{h}^{(\tau)}$, and thus, much smaller than the main term $\mathbf{h}^{(0)}$. This property allows an improvement on the accuracy of the solution when compared with the previous approach. Then, if $\Phi^{(i)} = V^{-1}\mathbf{h^{(i)}}$, the former equation is transformed into:
\begin{equation}
\displaystyle\frac{d}{dx}\left(\begin{tabular}{c}
$\Phi^{(\tau)}$  \\
$\Phi^{(\tau-1)}$  \\
$\cdots$ \\
$\Phi^{(1)}$ \\
$\Phi^{(0)}$
\end{tabular}\right) = \left(\begin{tabular}{c|c|c|c|c}
$T$ & $V^{-1} P V$ & $0$ & $\cdots$ & $0$\\
\hline
0 & $T$ & $V^{-1} P V$ & $0$ & $\cdots$ \\
\hline
0 & $0$ & $\cdots$ & $\cdots$ & $0$ \\
\hline
0 & $0$ & $0$ & $T$ & $V^{-1} P V$ \\
\hline
0 & $0$ & $0$ & $0$ & $T$
\end{tabular}\right)\left(\begin{tabular}{c}
$\Phi^{(\tau)}$  \\
$\Phi^{(\tau-1)}$  \\
$\cdots$ \\
$\Phi^{(1)}$ \\
$\Phi^{(0)}$
\end{tabular}\right),
\end{equation}
which is solvable sequentially. Therefore, the solution to the system is provided by:
\begin{equation}
\mathbf{h} = \sum_{i=0}^{\tau}\epsilon^i\mathbf{h^{(i)}} = V\sum_{i=0}^{\tau}\epsilon^i\Phi^{(i)},
\end{equation}
and thus:
\begin{equation}
\mathbf{y} = H\mathbf{h} = HV\sum_{i=0}^{\tau}\epsilon^i\Phi^{(i)}.
\end{equation}

\subsection{Perturbation model with multiple sources}

In some cases, it might be of interest the study of perturbed systems where the perturbation is produced by different sources. Additionally, each source could contribute to the solution with a perturbation intensity that can range over a wide variety of orders of magnitude. For these reasons, in this subsection we present a more general perturbation model based on these assumptions.

Let a system of differential equations be represented as:
\begin{equation}
\left\{\begin{tabular}{l}
$\displaystyle\frac{d\mathbf{y}}{dx}=\mathbf{b}(\mathbf{y}) + \sum_{s=1}^{n_s}\sum_{k=1}^{n_k}\epsilon_s^k\mathbf{g^{(sk)}}(\mathbf{y}) $ \\
$\mathbf{y}(x_0) = \mathbf{y_0}$
\end{tabular}\right.,
\end{equation}
where $n_s$ is the number of different sources of perturbation, $n_k$ is the number of different orders of magnitude between the different sources, $\mathbf{g^{(sk)}}$ are the functions related to the perturbations, and $\epsilon_s\ll 1$ $\forall s$ are the small parameters related to the intensity of the perturbation. In this notation $\epsilon_s$ is defined such that $\epsilon_i \sim \epsilon_j$ $\forall i,j$, that is, the perturbation sources are classified based on the order of magnitude of their intensity. Note that based on the perturbation, there could be terms $\mathbf{g^{(sk)}}$ equal to zero. For instance, let the differential equation be:
\begin{equation}
	\displaystyle\frac{dy}{dx} = y + 0.1 y^2 + 0.02 y^3,
\end{equation}
then, $n_s = 2$, $n_k = 2$, $b(y) = y$, $\epsilon_1^1 = 0.1$, $g^{(11)} = y^2$, $\epsilon_1^2 = 0.01$, $g^{(12)} = 0$, $\epsilon_2^1 = \sqrt{0.02}$, $g^{(21)} = 0$, $\epsilon_2^2 = 0.02$, and $g^{(22)} = y^3$. As can be seen, $\epsilon_2$ is defined such that its order of magnitude is the same as $\epsilon_1$.

As in previous cases, we can define a set of orthonormal basis functions and obtain their derivatives:
\begin{equation}
\displaystyle\frac{d\mathbf{h}}{dx} = \nabla\mathbf{h}\mathbf{b} + \sum_{s=1}^{n_s}\sum_{k=1}^{n_k}\epsilon_s^k\nabla\mathbf{h}\mathbf{g^{(sk)}},
\end{equation}
and project them into the basis functions $\mathbf{h}$ to obtain a linearized system using the Garlekin method:
\begin{equation}
\left\{\begin{tabular}{l}
$\displaystyle\frac{d\mathbf{h}}{dx} = B\mathbf{h} + \sum_{s=1}^{n_s}\sum_{k=1}^{n_k}\epsilon_s^k P^{(sk)}\mathbf{h}$ \\
$\mathbf{h}(x_0) = \mathbf{h}(\mathbf{y_0})$
\end{tabular}\right.,
\end{equation}
where $P^{(sk)}$ is defined as the following inner product:
\begin{equation}
P_{ij}^{(sk)} = \langle  \nabla h_i\mathbf{g^{(sk)}}, h_j \rangle = \int_{\Omega} \rho \nabla h_i\mathbf{g^{(sk)}} h_j d\Omega.
\end{equation} 

The process now is to perform a decomposition of $\mathbf{h}$ using a series expansion of the small parameter. However, in this case we do not have a common small parameter affecting the differential equation. For that reason, we introduce the small parameter $\delta$ as the reference parameter for the expansion such that $\delta$ is able to represent all the orders of magnitude resulting in the terms $\epsilon_s^k$, therefore $\delta \sim \epsilon_s$  $\forall s$. That way, $\mathbf{h}$ can be decomposed into:
\begin{equation}
\mathbf{h} = \mathbf{h^{(0)}} + \sum_{i=1}^{\tau}\delta^i\mathbf{h^{(i)}},
\end{equation}
and introduced in the differential equation to obtain:
\begin{eqnarray}
\displaystyle\frac{d\mathbf{h}}{dx} & = & \frac{d\mathbf{h^{(0)}}}{dx} +  \sum_{i=1}^{op}\delta^i \frac{d\mathbf{h^{(i)}}}{dx} = L\mathbf{h^{(0)}} + \sum_{s=1}^{n_s}\sum_{k=1}^{n_k}\epsilon_s^k P^{(sk)}\mathbf{h^{(0)}} + \nonumber \\ 
& + & L\sum_{i=1}^{\tau}\delta^i\mathbf{h^{(i)}} + \sum_{s=1}^{n_s}\sum_{k=1}^{n_k}\epsilon_s^k P^{(sk)} \sum_{i=1}^{\tau}\delta^i\mathbf{h^{(i)}},
\end{eqnarray}
which leads to the following system:
\begin{eqnarray}
\displaystyle\frac{d\mathbf{h^{(0)}}}{dx} & = & B\mathbf{h^{(0)}} \nonumber \\
\displaystyle\frac{d\mathbf{h^{(1)}}}{dx} & = & B\mathbf{h^{(1)}} + \sum_{s=1}^{n_s}\frac{\epsilon_s}{\delta} P^{(s1)}\mathbf{h^{(0)}} \nonumber \\
\displaystyle\frac{d\mathbf{h^{(2)}}}{dx} & = & B\mathbf{h^{(2)}} + \sum_{s=1}^{n_s}\left(\frac{\epsilon_s}{\delta}\right)^2 P^{(s2)}\mathbf{h^{(0)}} + \sum_{s=1}^{n_s}\left(\frac{\epsilon_s}{\delta}\right) P^{(s1)}\mathbf{h^{(1)}} \nonumber \\
\displaystyle\frac{d\mathbf{h^{(i)}}}{dx} & = & B\mathbf{h^{(i)}} + \sum_{s=1}^{n_s}\sum_{k=1}^{i}\left(\frac{\epsilon_s}{\delta}\right)^k P^{(sk)} \mathbf{h^{(i-k)}} \nonumber \\
\displaystyle\frac{d\mathbf{h^{(\tau)}}}{dx} & = & B\mathbf{h^{(\tau)}} + \sum_{s=1}^{n_s}\sum_{k=1}^{i}\left(\frac{\epsilon_s}{\delta}\right)^k P^{(sk)} \mathbf{h^{(i-k)}} + \mathcal{O}(\delta).
\end{eqnarray}
In order to simplify the notation, we define $Q^{(ij)}$ as:
\begin{equation}
Q^{(i,j)} = \sum_{s=1}^{n_s}\left(\frac{\epsilon_s}{\delta}\right)^j P^{(sj)},
\end{equation}
then, the term $i$ of the system of differential equations becomes:
\begin{equation}
\displaystyle\frac{d\mathbf{h^{(i)}}}{dt} = B\mathbf{h^{(i)}} + \sum_{k=1}^{i}Q^{(i,k)} \mathbf{h^{(i-k)}} = B\mathbf{h^{(i)}} + \sum_{j=0}^{i-1}Q^{(i,i-j)} \mathbf{h^{(j)}}
\end{equation}
and so, the whole system can be approximated as:
\begin{equation}
\displaystyle\frac{d}{dx}\left(\begin{tabular}{c}
$\mathbf{h^{(\tau)}}$  \\
$\mathbf{h^{(\tau-1)}}$  \\
$\cdots$ \\
$\mathbf{h^{(1)}}$ \\
$\mathbf{h^{(0)}}$
\end{tabular}\right) = \left(\begin{tabular}{c|c|c|c|c}
$B$ & $Q^{(\tau,1)}$ & $Q^{(\tau,2)}$ & $\cdots$ & $Q^{(\tau,\tau)}$\\
\hline
0 & $B$ & $Q^{(\tau-1,1)}$ & $\cdots$ & $Q^{(\tau-1,\tau-1)}$ \\
\hline
$\cdots$ & $\cdots$ & $\cdots$ & $\cdots$ & $\cdots$ \\
\hline
0 & $0$ & $0$ & $B$ & $Q^{(1,1)}$ \\
\hline
0 & $0$ & $0$ & $0$ & $B$
\end{tabular}\right)\left(\begin{tabular}{c}
$\mathbf{h^{(\tau)}}$  \\
$\mathbf{h^{(\tau-1)}}$  \\
$\cdots$ \\
$\mathbf{h^{(1)}}$ \\
$\mathbf{h^{(0)}}$
\end{tabular}\right).
\end{equation}
By doing the unitary transformation $\Phi^{(i)} = V^{-1}\mathbf{h^{(i)}}$, such that $B=VTV^{-1}$, and naming $R^{(ij)}$ to:
\begin{equation}
R^{(i,j)} = V^{-1} Q^{(i,j)} V,
\end{equation}
the former equation can be rewritten as:
\begin{equation}
\displaystyle\frac{d}{dx}\left(\begin{tabular}{c}
$\Phi^{(\tau)}$  \\
$\Phi^{(\tau-1)}$  \\
$\cdots$ \\
$\Phi^{(1)}$ \\
$\Phi^{(0)}$
\end{tabular}\right) = \left(\begin{tabular}{c|c|c|c|c}
$T$ & $R^{(\tau,1)}$ & $R^{(\tau,2)}$ & $\cdots$ & $R^{(\tau,\tau)}$\\
\hline
0 & $T$ & $R^{(\tau-1,1)}$ & $\cdots$ & $R^{(\tau-1,\tau-1)}$ \\
\hline
$\cdots$ & $\cdots$ & $\cdots$ & $\cdots$ & $\cdots$ \\
\hline
0 & $0$ & $0$ & $T$ & $R^{(1,1)}$ \\
\hline
0 & $0$ & $0$ & $0$ & $T$
\end{tabular}\right)\left(\begin{tabular}{c}
$\Phi^{(\tau)}$  \\
$\Phi^{(\tau-1)}$  \\
$\cdots$ \\
$\Phi^{(1)}$ \\
$\Phi^{(0)}$
\end{tabular}\right),
\end{equation}
which can be solved sequentially. Therefore, the solution to the system is provided by:
\begin{equation}
\mathbf{h} = \sum_{i=0}^{\tau}\delta^i\mathbf{h^{(i)}} = V\sum_{i=0}^{\tau}\delta^i\Phi^{(i)},
\end{equation}
and thus:
\begin{equation}
\mathbf{y} = H\mathbf{h} = HV\sum_{i=0}^{\tau}\delta^i\Phi^{(i)}.
\end{equation}

%%%%%%%%%%%%%%%%%%%%%%%%%%%%%%%%%%%%%%%%%%%%%%%%%%%%%%%%%%%%%%%%%%%%%%%%%%%%%%%%%%%%%%%%%%%%%%
%%%%%%%%%%%%%%%%%%%%%%%%%%%%%%%%%%%%%%%%%%%%%%%%%%%%%%%%%%%%%%%%%%%%%%%%%%%%%%%%%%%%%%%%%%%%%%
%%%%%%%%%%%%%%%%%%%%%%%%%%%%%%%%%%%%%%%%%%%%%%%%%%%%%%%%%%%%%%%%%%%%%%%%%%%%%%%%%%%%%%%%%%%%%%

%%%%%%%%%%%%%%%%%%%%%%%%%%%%%%%%%%%%%%%%%%%%%%%%%%%%%%%%%%%%%%%%%%%%%%%%%%%%%%%%%%%%%%%%%%%%%%
%%%%%%%%%%%%%%%%%%%%%%%%%%%%%%%%%%%%%%%%%%%%%%%%%%%%%%%%%%%%%%%%%%%%%%%%%%%%%%%%%%%%%%%%%%%%%%
%%%%%%%%%%%%%%%%%%%%%%%%%%%%%%%%%%%%%%%%%%%%%%%%%%%%%%%%%%%%%%%%%%%%%%%%%%%%%%%%%%%%%%%%%%%%%%

\section{Examples of application}

In this section two examples of application are presented to show the performance and limitations of the proposed methodologies when dealing with non-linear perturbed problems. 

\subsection{Duffing oscillator}

The evolution of a simple duffing oscillator can be described by the following differential equation:
\begin{eqnarray}
	\displaystyle\frac{d q}{dt} & = & p, \nonumber \\
	\displaystyle\frac{d p}{dt} & = & -q - \epsilon q^3,
\end{eqnarray}
where $q$ is the position, $p$ is the velocity of the system, $\epsilon$ is the small parameter, and $t$ is the time evolution of the system. This means that the input for the algorithm is: $F(1,1,1) = 1$, $F(1,1,3) = 1$, $F(2,1,1) = -1$, $F(2,1,2) = 1$ for the linear part, and $F(2,2,1) = -\epsilon$, $F(2,2,2) = 3$ for the nonlinear term of the equation.

\begin{figure}[!ht]
	\centering
	{\includegraphics[width = 0.9\textwidth]{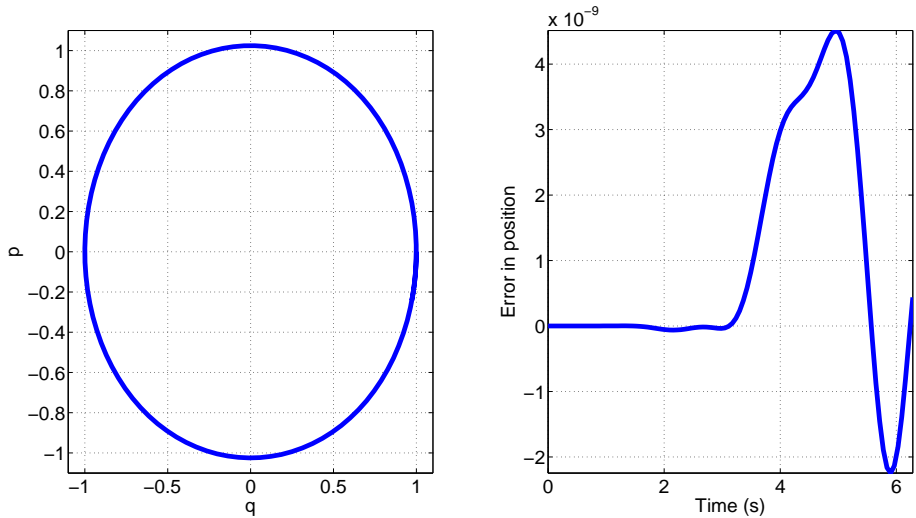}}
	\caption{State space solution (left) and position error (right) for a duffing oscillator with $\epsilon = 0.1$ using basis functions of order 11.}
	\label{fig:duffing_1_11}
\end{figure} 

Figure~\ref{fig:duffing_1_11} shows the system evolution and its error for an application of up to $\sigma = 11$ order basis functions to a differential equation with $\epsilon = 0.1$ and initial conditions $q_0 = 1$, and $p_0 = 0$. As can be seen, the error of this solution is in the order of magnitude of $10^{-9}$. If required, this error can be improved even further by just increasing the order of the basis functions $\sigma$. Conversely, if a solution with a lower number of terms is required, the order of the basis functions can also be reduced. This will decrease the accuracy of the solution, but it will also decrease the computational time to obtain it. Note also that the computation of the operator matrix $M$ is a one time computation effort for each order of basis functions. This means that once it is computed, it can be used for any combination of initial conditions.

\begin{figure}[!ht]
	\centering
	{\includegraphics[width = 0.9\textwidth]{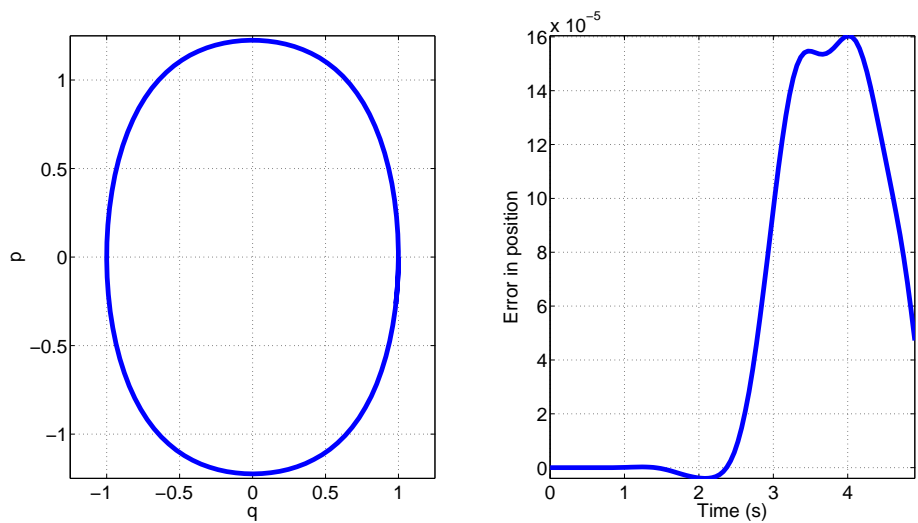}}
	\caption{State space solution (left) and position error (right) for a duffing oscillator with $\epsilon = 1$ using basis functions of order 11.}
	\label{fig:duffing_0_11}
\end{figure}

We can also study a more extreme of application. If we define $\epsilon = 1$, then, the non-linear part of the differential equation has the same intensity as the linear part. Figure~\ref{fig:duffing_0_11} shows the solution and associate position error when using basis functions of order 11 for this example. As can be seen, the error has increased when compared with the previous example to $10^{-4}$, however, the algorithm is still able to obtain an accurate solution to the problem. Another interesting thing to note is that the solution in $p$ leaves the region of definition of the Legendre polynomials while still providing a good accuracy. This is a situation that may happen depending on the problem due to the fact that the Garlekin method minimizes the error in the range of definition of the Legendre polynomials, and thus, sometimes the approximation can be extended out of the defined domain.

\begin{figure}[!ht]
	\centering
	{\includegraphics[width = 0.9\textwidth]{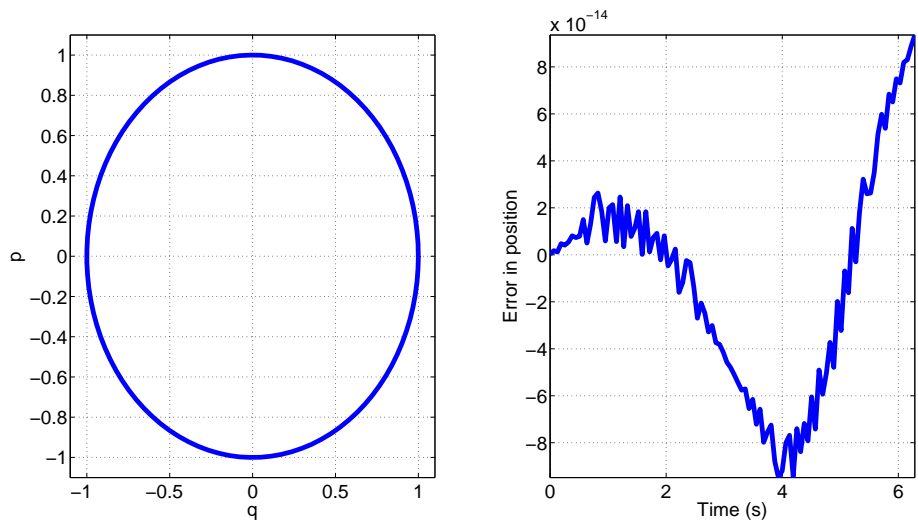}}
	\caption{State space solution (left) and position error (right) for a duffing oscillator with $\epsilon = 0.001$ using basis functions of order 11.}
	\label{fig:duffing_3_7}
\end{figure} 

If instead, we reduce the value of the small parameter $\epsilon$, the accuracy improves significantly. Particularly, if $\epsilon = 0.001$ it is possible to obtain nearly machine error precision with basis functions of order 7 as seen in Fig.~\ref{fig:duffing_3_7}. Note also that changing the initial condition $q_0$ and changing the value of the small parameter $\epsilon$ are equivalent operations since it is always possible to normalize the variables such that they range in $[-1,1]$. Particularly, if the maximum value of the variables is $Y$, we can perform the change of variable $r = q/Y$ and $s=p/Y$ to obtain this equivalent system:
\begin{eqnarray}
	\displaystyle\frac{d r}{dt} & = & s, \nonumber \\
	\displaystyle\frac{d s}{dt} & = & -r - \epsilon Y^2 r^3,
\end{eqnarray}
which has the same effect to having modified the value of the small parameter.

\begin{figure}[!ht]
	\centering
	{\includegraphics[width = 0.9\textwidth]{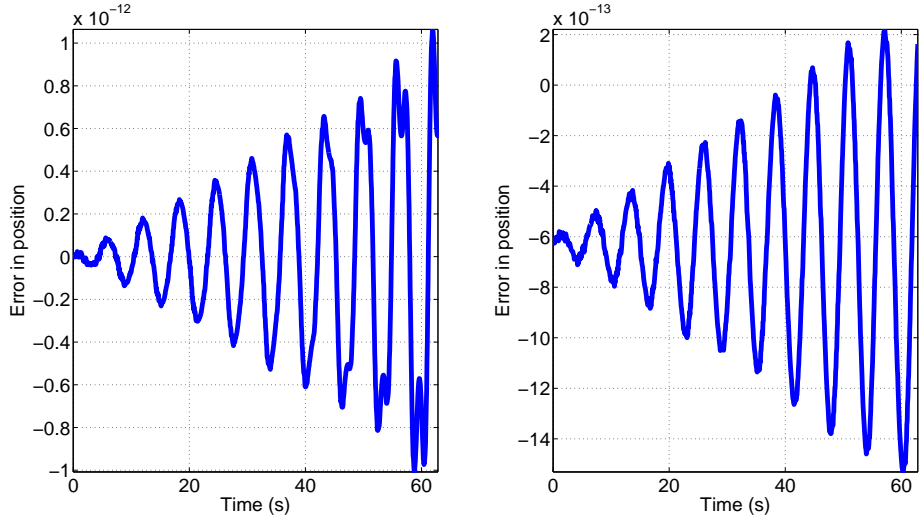}}
	\caption{Long term evolution of the position error for a duffing oscillator with $\epsilon = 0.001$ using basis functions of order 9 (left), and 11 (right).}
	\label{fig:duffing_longterm}
\end{figure}

Finally, it is also of interest to study the long term evolution of these solutions as well as the effect of using different order of basis functions. Figure~\ref{fig:duffing_longterm} shows the long term solution for $\epsilon = 0.001$ when using basis functions of order 9 and 11. As can be seen, the maximum error after nearly 11 complete rotations is $10^{-12}$ and $10^{-13}$ respectively, showing that the error improves as the order of basis functions increases. Moreover, it can be observed that the error slightly increases as the system evolves. This is common with other analytical and numerical perturbation techniques. In this regard, note also that the error presents an oscillating behavior. This is caused by the approximation errors in the frequencies obtained to represent the system.

In general, problems like the duffing oscillator with $\epsilon\ll 1$ are the kind of problems that are more suited for the application of the proposed algorithm for non-linear systems, specially in systems with quasi-periodic solutions. Examples of this kind of systems can be seen, for instance, in astrodynamics. In particular, Refs.~\cite{koopman_zonal,koopman_analysis} show the application and study of this methodology to the zonal harmonics problem of an object orbiting an oblate celestial body. On the other hand, Ref.~\cite{koopman_3body} shows an application to the three-body problem in astrodynamics and how to use the operator matrix $M$ to create a control scheme. This shows the potential use of this algorithm to study complex non-linear perturbed systems.

\subsection{Van der Pole oscillator}

Another interesting example of application is the Van der Pole oscillator due to its unstable nature close to the equilibrium point, the presence of the limit cycle, and its eigenvalues with real positive part. The dynamics of this oscillator can be represented by:
\begin{eqnarray}
	\displaystyle\frac{d q}{dt} & = & p, \nonumber \\
	\displaystyle\frac{d p}{dt} & = & -q + \epsilon\left(1-q^2\right)p,
\end{eqnarray}
where $q$ is the position, and $p$ is the velocity of the system. This implies an algorithm input in the form: $F(1,1,1) = 1$, $F(1,1,3) = 1$; $F(2,1,1) = -1$, $F(2,1,2) = 1$; $F(2,2,1) = \epsilon$, $F(2,2,3) = 1$; and $F(2,3,1) = -\epsilon$, $F(2,3,2) = 2$, $F(2,3,3) = 1$.

As a first example of application, we impose the conditions $\epsilon = 0.1$, $q_0 = 0.7$, and $p_0 = 0$. This places the dynamics of the oscillator in the region where the particle is going to move away from the equilibrium point of the dynamic. Figure~\ref{fig:pole_1_11} shows the state space and the position error of the algorithm when using basis functions of order 11. As can be seen, the error is worse than in the duffing oscillator. This is caused by the fact that the duffing oscillator has a quasi-periodic dynamic which is easier to represent using operator methodology. Nevertheless, the algorithm is able to obtain a good accuracy even in this problem. 

\begin{figure}[!ht]
	\centering
	{\includegraphics[width = 0.8\textwidth]{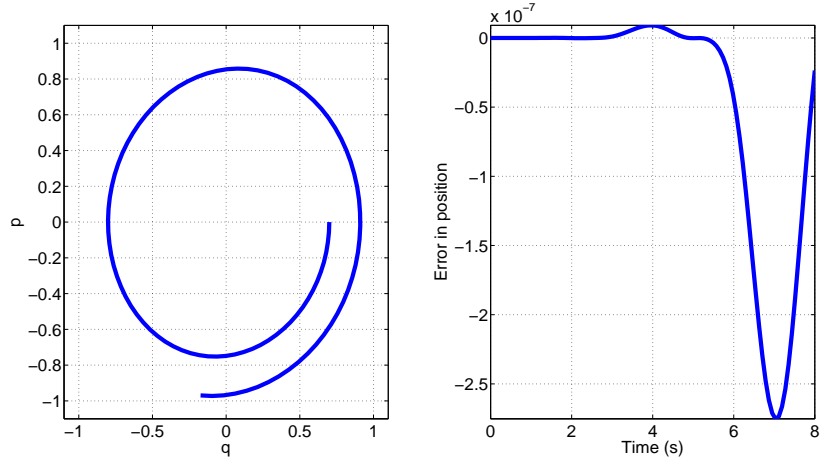}}
	\caption{State space solution (left) and position error (right) for the Van der Pole oscillator using basis functions of order 11.}
	\label{fig:pole_1_11}
\end{figure}

\begin{figure}[!ht]
	\centering
	{\includegraphics[width = 0.8\textwidth]{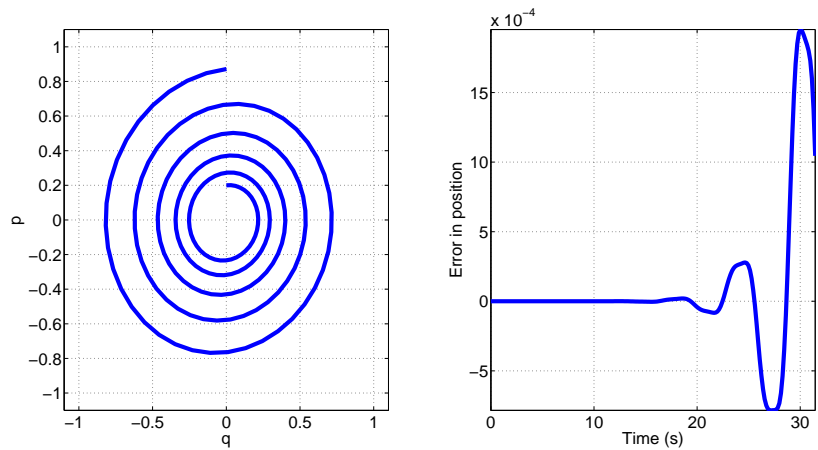}}
	\caption{State space solution (left) and position error (right) for the Van der Pole oscillator close to the equilibrium point using basis functions of order 15.}
	\label{fig:pole_1_15}
\end{figure} 

In the Van der Pole oscillator, it is of special interest the study of the dynamic when the particle is closer to the equilibrium point and its error after several rotations. To that end, Fig.~\ref{fig:pole_1_15} shows the evolution and position error when performing a propagation using basis functions of order 15 with initial conditions $\epsilon = 0.1$, $q_0 = 0.0$, and $p_0 = 0.2$. Particularly, the first rotation has an error smaller than $10^{-9}$ which rapidly increases over the different revolutions. As in the duffing oscillator, this happens primarily due to the approximation performed in the eigenvalues of the system, which for the Van der Pole oscillator, also affect the magnitude of the oscillation (due to having positive real eigenvalues even on the unperturbed system. Nevertheless, the methodology is able to represent the dynamics of the problem but with a smaller accuracy, and shows that this methodology converges faster in problem whose unperturbed system has no positive real eigenvalues, making the solution numerically more stable.

Finally, it is also possible to study the region close to the limit circle. However, since this region is out of the domain of definition of the Legendre polynomials, it is first necessary to normalize the variables. Particularly, if the maximum value of the variables is $Y = 2$, we can perform the change of variable $r = q/Y$ and $s=p/Y$ to obtain this equivalent system:
\begin{eqnarray}
	\displaystyle\frac{d r}{dt} & = & s, \nonumber \\
	\displaystyle\frac{d s}{dt} & = & -r + \epsilon\left( 1 - Y^2 r^2\right)s.
\end{eqnarray}

Figures~\ref{fig:pole_inside} and~\ref{fig:pole_outside} shows the evolution and position error when applying $\epsilon = 0.1$ and initial conditions, $q_0 = 0$, and $p_0 = 1.95$ and $p_0 = 2.05$ respectively. Both figures represent the two potential situations close to the limit circle, Fig~\ref{fig:pole_inside} coming from the interior, and Fig.~\ref{fig:pole_outside} coming from the outside. As can be seen, the error is now smaller and more comparable with the one obtained for the duffing oscillator (note that the effective value of the small parameter has increased to 0.4 due to the normalization). This is due to the more periodic behavior that the system is presenting. In fact, if the periodic initial condition is imposed $p_0 = 2$, the error obtained further decreases, showing the increase in performance expected when dealing with periodic and quasi-periodic solutions.

\begin{figure}[!ht]
	\centering
	{\includegraphics[width = 0.8\textwidth]{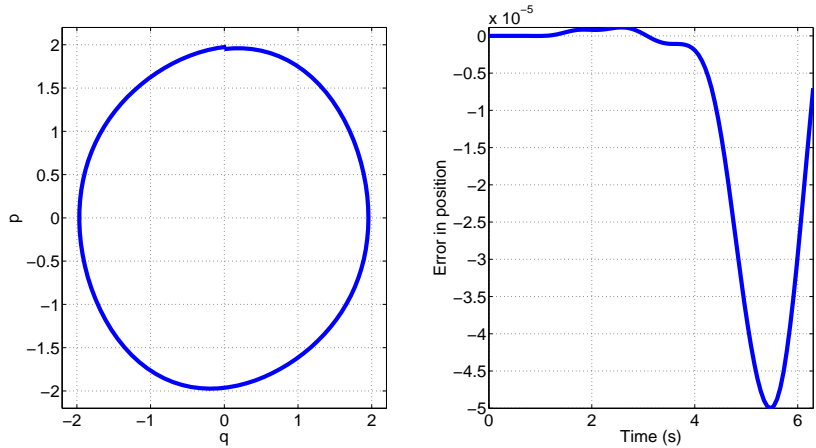}}
	\caption{State space solution (left) and position error (right) for the Van der Pole oscillator using basis functions of order 11 close to the limit circle (inside dynamic).}
	\label{fig:pole_inside}
\end{figure} 

\newpage
\begin{figure}[!ht]
	\centering
	{\includegraphics[width = 0.8\textwidth]{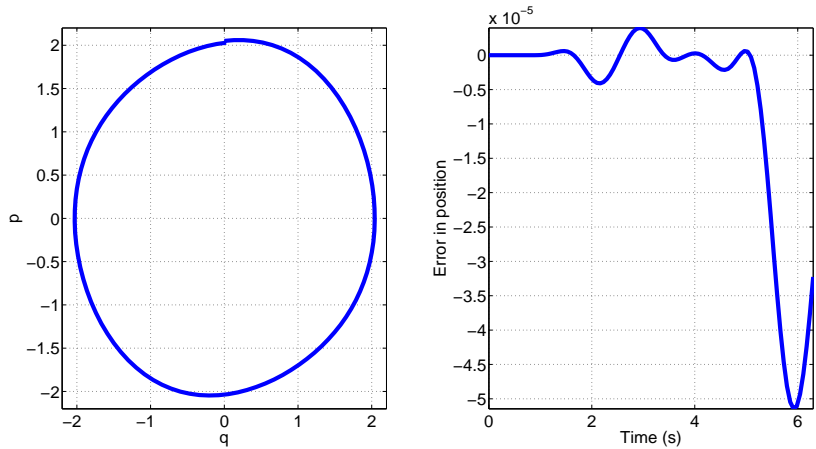}}
	\caption{State space solution (left) and position error (right) for the Van der Pole oscillator using basis functions of order 11 close to the limit circle (outside dynamic).}
	\label{fig:pole_outside}
\end{figure} 

%%%%%%%%%%%%%%%%%%%%%%%%%%%%%%%%%%%%%%%%%%%%%%%%%%%%%%%%%%%%%%%%%%%%%%%%%%%%%%%%%%%%%%%%%%%%%%
%%%%%%%%%%%%%%%%%%%%%%%%%%%%%%%%%%%%%%%%%%%%%%%%%%%%%%%%%%%%%%%%%%%%%%%%%%%%%%%%%%%%%%%%%%%%%%
%%%%%%%%%%%%%%%%%%%%%%%%%%%%%%%%%%%%%%%%%%%%%%%%%%%%%%%%%%%%%%%%%%%%%%%%%%%%%%%%%%%%%%%%%%%%%%

\section{Conclusions}

This work focuses on the generation of solutions to ordinary differential equations. Particularly, and for linear systems, this work makes use of the Schur decomposition to perform unitary transformations of the original variables of the differential equation such that the resultant system is upper triangular. Once this is done, a theorem and an algorithm are introduced to sequentially solve the upper triangular differential equation analytically.

This result is later applied to the solution of perturbed non-linear systems using operator theory, with special focus on perturbation problems with a strong linear component. To that end, a general methodology is presented for the use of any kind of orthonormal basis functions. Later, the specific algorithm for the use of Legendre polynomials is presented, which allows to analytically obtain the approximated linearization of a polynomial differential equation in closed form. This allows to obtain any order of the solution following an automatic process. Additionally, and since the linearization does not depend on the initial conditions, the linearization only has to be computed once for a given set of basis functions.

These non-linear methodologies allow to solve problems where the non-linear term is weaker than the linear part, and they are specially suited for problems whose solution is quasi-periodic, making these algorithms very interesting for complex perturbation problems in multiple variables. Moreover, and compared with the direct application of the Koopman operator, this methodology can be applied to non-diagonalizable systems. This allows to apply the technique to a wider set of problems, and at the same time, it enables the use of a wider set of basis functions to represent the solution. 

Additionally, a perturbation model based on the Schur decomposition is presented. This allows to separate the contributions of the linear and perturbed terms of the differential equation. Moreover, and when approximating the expansion in the small parameter, this methodology allows to obtain the evolution of a perturbed problem using the solution and Schur decomposition of the unperturbed problem, which greatly reduces the complexity of the computation.

Finally, several examples of application are presented where the accuracy performance and potential limitations of these techniques are shown. In that regard, it is important to note that the solution provided by these methodologies is an analytical solution, and thus, it allows to perform the spectral study of the system. This has many applications for different problems, specially for control applications.

%%%%%%%%%%%%%%%%%%%%%%%%%%%%%%%%%%%%%%%%%%%%%%%%%%%%%%%%%%%%%%%%%%%%%%%%%%%%%%%%%%%%%%%%%%%%%%
%%%%%%%%%%%%%%%%%%%%%%%%%%%%%%%%%%%%%%%%%%%%%%%%%%%%%%%%%%%%%%%%%%%%%%%%%%%%%%%%%%%%%%%%%%%%%%
%%%%%%%%%%%%%%%%%%%%%%%%%%%%%%%%%%%%%%%%%%%%%%%%%%%%%%%%%%%%%%%%%%%%%%%%%%%%%%%%%%%%%%%%%%%%%%

\end{document}